\newtheorem{theorem}{Theorem}
\newtheorem{definition}{Definition}
\newtheorem{lemma}{Lemma}
\newtheorem{cor}{Corollary}
\newtheorem{prop}{Proposition}
\newcommand{\be}{\begin{enumerate}}
\newcommand{\ee}{\end{enumerate}}
\newcommand{\beq}{\begin{equation}}
\newcommand{\eeq}{\end{equation}}
\newenvironment{romanenumerate}
	{\begin{enumerate}

	}
        {
        
        \end{enumerate}
        } 
\newcommand{\comment}[1]{} 
\begin{document}

\title{Quantifier elimination algorithm  to boolean combination of $\exists\forall$-formulas in the theory of a free group }
\author{Olga Kharlampovich, Alexei Myasnikov}

\maketitle
\begin{abstract}
It was proved by Sela and by the authors that every formula in the theory of a free group $F$ is equivalent to a boolean combination of $\exists\forall$-formulas. We also proved that the elementary theory of a free group is decidable (there is an algorithm given a sentence to decide whether this sentence belongs to $Th(F)$ ). In this paper we give an algorithm for reduction of a first order formula over a free group to the equivalent boolean combination of $\exists\forall$-formulas.
\end{abstract}
\section{Introduction}
It was proved in \cite{Sela},\cite{KMel} that every formula in the theory of a free group $F$ is equivalent to a boolean combination of $\exists\forall$-formulas.  We also proved that the elementary theory of a free group is decidable (there is an algorithm given a sentence to decide whether this sentence belongs to $Th(F)$ ).
If the language of the free group $F$ contains constants, then it was shown in \cite{KMdef} that
 every definable subset of $F$ is defined by some boolean combination of
formulas \begin{equation}\label{AE}
\exists X\forall Y (U(P,X)=1\wedge V(P,X,Y)\neq 1),
\end{equation}where $X,Y,P$ are tuples of variables. We call these formulas {\em conjunctive $\exists\forall$-formulas}.
We will prove the following result.
\begin{theorem} \label{Th1} \label{main} Let $F$ be a free  group. There exists an algorithm given a first-order formula $\phi$  to find a boolean combination of $\exists\forall$-formulas that defines the same set as $\phi$ over $F$.
\end{theorem}

\section{Preliminary results}This result was not explicitly formulated in \cite{KMel}.  We also corrected some inaccuracies and errors  in \cite{KMel}. The reader should have \cite{KMel}, Section 8 as a reference for  most of  the required algorithmic results.

\subsection{Effectiveness of the relative JSJ decomposition}
In \cite{JSJ} we proved the following results.
\begin{theorem}\cite{JSJ}\label{th:JSJ} There exists an algorithm to obtain a cyclic
and abelian $JSJ$ decompositions for a f.g. fully residually free
group relative to the subgroup of constants $F$ (meaning that $F$ is elliptic in the decomposition) and to find the maximal standard quotient (see \cite{KMel}, Definition 17).
\end{theorem}


\begin{theorem} \label{modulo} (\cite{JSJ}, Theorem 13.1, \cite{KMel}, Theorem 35)  There exists an algorithm to obtain an abelian $JSJ$
decomposition for a f.g. fully residually free group relative to finitely generated
subgroups $K_1,\ldots ,K_m$ and to find the maximal standard quotient. \end{theorem}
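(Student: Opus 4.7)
The plan has two components, corresponding to the two assertions of the theorem. For the first --- producing the abelian JSJ decomposition relative to $K_1,\ldots,K_m$ --- I would bootstrap from Theorem \ref{th:JSJ}. Apply that algorithm to obtain an abelian JSJ decomposition $D$ of $G$ modulo the constants $F$. Then, for each element of a fixed finite generating set of each $K_i$, decide algorithmically whether it is elliptic in $D$ by computing its normal form in the corresponding graph of groups (this is effective since vertex groups of an abelian JSJ of a limit group are themselves limit groups, and conjugacy and membership in vertex and edge subgroups are decidable). For non-elliptic generators, refine $D$ by a controlled sequence of folds/collapses along the edges traversed, producing a splitting $D'$ in which every $K_i$ is elliptic. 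Finally, recover the JSJ property relative to $\{K_1,\ldots,K_m\}$ by iteratively re-applying Theorem \ref{th:JSJ} to each vertex group of $D'$ (treating the incident edge groups and the conjugates of $K_i$ lying inside the vertex group as the new peripheral structure); inductively this terminates, since each step strictly reduces the Grushko--Dunwoody complexity of the part being refined.

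For the second component --- computing the maximal standard quotient --- I would use that, given the relative JSJ just constructed, there is a canonical finite family of \emph{standard quotients} of $G$, each obtained by quotienting out by the kernel of a homomorphism that is injective on rigid vertex groups and on edge groups while killing a specified subgroup of the modular structure on QH and abelian vertex groups. I would enumerate this family by enumerating the finitely many ways of encoding the modular data on QH and abelian pieces (bounded using the Euler characteristic and the ranks of abelian vertex groups read off from the JSJ). Then I would identify the maximal one by testing the natural partial order (which is effectively computable once presentations of the quotients are in hand), using that fully residually freeness of a finitely presented quotient of a limit group is decidable via the Makanin--Razborov process.

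The main obstacle is the first step: ensuring that the refinement $D \rightsquigarrow D'$ both terminates and produces a genuine JSJ relative to the new peripheral structure, rather than merely a splitting in which the $K_i$'s are elliptic. This is where the inductive use of Theorem \ref{th:JSJ} on vertex groups is essential, and where one must appeal to an effective form of Bestvina--Feighn accessibility for limit groups to bound the depth of recursion. Once this is in place, the maximal standard quotient falls out from the finite enumeration described above, so no further algorithmic input is needed beyond the JSJ itself and the decidability of embedding/residual-freeness questions for limit groups --- all of which the earlier parts of \cite{JSJ,KMel} supply.
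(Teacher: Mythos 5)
This theorem is stated in the paper with no proof at all --- it is simply cited from \cite{JSJ} (Theorem 13.1) and \cite{KMel} (Theorem 35), so there is no internal argument to compare yours against. What can be compared is your proposal against the construction actually carried out in those references, and there is a genuine gap.

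The central problem is the bootstrapping step. You propose to start from the absolute abelian JSJ $D$ (Theorem~\ref{th:JSJ}), detect non-elliptic generators of the $K_i$, and then ``refine $D$ by a controlled sequence of folds/collapses'' so that the $K_i$ become elliptic, after which you would re-run Theorem~\ref{th:JSJ} inductively on vertex groups. This goes in the wrong direction. Refining (adding edges) can only destroy ellipticity, never create it; making a finitely generated subgroup elliptic requires \emph{collapsing} edges of $D$, and the resulting splitting is coarser, with larger vertex groups. Re-applying the absolute JSJ algorithm to those larger vertex groups with the incident edge groups and the $K_i$-conjugates as a new peripheral structure is precisely the relative JSJ problem you started with, so the recursion is not obviously smaller, and the claim that Grushko--Dunwoody complexity strictly decreases is not justified (collapsing does not reduce it, and the new peripheral families grow). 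There is also no argument that the splitting you land on has the universal/maximality property of a JSJ tree relative to $\{K_1,\ldots,K_m\}$, as opposed to merely being a splitting in which the $K_i$ are elliptic. The construction in \cite{JSJ} and \cite{KMel} does not bootstrap from the absolute JSJ at all: it runs the Makanin--Razborov/Elimination process on generalized equations directly, treating the generators of $H=\langle P\rangle$ (and here of $K_1,\ldots,K_m$) as parameters, so the splittings it produces are relative from the outset and the termination is controlled by the combinatorics of that process rather than by a complexity of graph-of-groups decompositions.

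On the second component, the outline is closer in spirit but still underspecified. The maximal standard quotient in \cite{KMel} is a single canonical quotient attached to the relative JSJ (roughly, the quotient through which non-injective or ``shortened'' solutions factor), not the maximum of a finite poset that you enumerate and compare; deciding which of a finite family of limit-group quotients is ``maximal'' by testing a partial order would require deciding epimorphism/isomorphism questions that you have not shown to be effective. In the references this quotient falls out of the same Elimination machinery that produces the relative JSJ, which is why the two assertions are packaged in one theorem. Your appeal to decidability of full residual freeness is correct but is not the bottleneck.
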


\subsection{Encoding solutions with the tree ${T}$.}\label{can_diagram}
An algorithm is described in Section 7 of  \cite{KMel} which constructs, for a given system of equations $S(X,A)$ over the free group $F$,
a canonical $Hom$-diagram encoding the set of solutions of $S$. The diagram consists of a directed finite rooted tree $T$ (the augmented canonical embedding tree, see \cite{KMel}, Section 7.6) that has, in particular,
the following properties.  Let $G=F_{R(S)}$.
\begin{romanenumerate}
\item Each vertex $v$ of $T$ is labelled by a pair  $(G_{v},Q_{v})$, where $G_{v}$ is an $F$-quotient of $G$ and $Q_{v}$ the subgroup of canonical automorphisms in $\mathrm{Aut}_{F}(G_{v})$ (see \cite{KMel} for the definition).
The root $v_0$ is labelled by $(G,1)$ and every leaf is labelled by $(F(Y)\ast F,1)$, where $Y$ is some finite set (called \emph{free variables}).
Each $G_{v}$, except possibly $G_{v_{0}}=G$, is fully residually $F$.
\item Every (directed) edge $v\rightarrow v'$ is labelled by a proper surjective $F$-homomorphism $\pi(v,v'):G_{v}\rightarrow G_{v'}$.
\item For every $\phi\in\mathrm{Hom}_{F}(G,F)$ there is a path $p=v_0 v_1 \ldots v_k$ where $v_k$ is a leaf labelled by
$(F(Y)\ast F,1)$, elements $\sigma_{i}\in Q_{v_{i}}$, and a $F$-homomorphism
$\phi_{0}: F(Y)\ast F\rightarrow F$ such that
\begin{equation}\label{8}
\phi = \pi(v_0,v_1) \sigma_1 \pi(v_1,v_2) \sigma_2 \cdots \pi(v_{k-2},v_{k-1})\sigma_{k-1}\pi(v_{k-1},v_{k})\phi_{0}.
\end{equation}
\item The canonical splitting of each fully residually free group $G_{v}$ is its Grushko decomposition followed by the abelian JSJ decompositions of the factors.
\end{romanenumerate}
The set of all homomorphisms in (\ref{8}) associated with a path from the root to a leaf of the tree $T$ is called a {\em fundamental set} or {\em fundamental sequence} of solutions.

The algorithm gives for each $G_{v}$ a finite presentation $\langle A_{v}|\mathcal{R}_{v}\rangle$, and for each $Q_{v}$ a finite list of
generators in the form of functions $A_{v}\rightarrow (A_{v}\cup A_{v}^{-1})^{*}$.  Note that the choices for $\phi_{0}$ are
exactly parametrized by the set of functions from $Y$ to $F$.

\begin{definition}
\begin{enumerate} A fundamental sequence corresponding to the branch
$$G\rightarrow _{\pi_0} G_1\rightarrow _{\pi_1}  G_2\rightarrow\ldots \rightarrow _{\pi_{k-1}}  G_k=F(Y)*F$$

 is called {\em strict}  if it has the following properties:
\item The image of each non-abelian vertex group of $G_i, 1\leq i< k,$ under $\pi _i$ is non-abelian.
\item For each $1\leq i< k$, $\pi_i$ is injective on rigid subgroups, edge groups, and subgroups generated by the images of edge groups in abelian vertex groups in $G_{i-1}$.
\item For each $1\leq i <k $, if $R$ is a rigid subgroup in $G_{i}$ and $\{A_j\}$, $1\leq j\leq m$, the abelian vertex groups in $G_{i}$ connected to $R$ by edge groups $E_j$ with the maps $\eta_j:E_j\rightarrow A_j$, then $\pi_i$ is injective on the subgroup $\langle R,Is(\eta_1(E_1)),\ldots,Is(\eta_m(E_m))\rangle , $ where
$Is(E_i)$ is the isolator of $E_i$ in $A_i$ (the minimal direct factor
containing $E_i$) which we will call the {\em envelop of $R$}.
\item The images of different factors in the Grushko decomposition of $G_i$ under $\pi _i$  are different factors in the free decomposition of $G_{i+1}$. 
\end{enumerate}
\end{definition}

{\bf Definition of a canonical NTQ system of equations and NTQ group.} The definition of an NTQ system can be found, for example, in \cite{KMel}, Section 2.3. We can associate a canonical NTQ system (and an NTQ group that is the coordinate group of the system) to the strict fundamental sequence above as in \cite{KMel}, Section 7.3. We now recall the construction here.

We do this inductively beginning with the bottom group $G_k$. 
Each free factor of $G_{k-1}$ (denote it $\bar G_{k-1}$) is mapped by $\pi _{k-1}$ to a product of free factors of $G_{k}$ (denote it by $\bar G_{k}$). Let $D$ be a JSJ decomposition of  $\bar G_{k-1}$.  

Step 1. For each edge group $E$ in $D$ between rigid subgroups we extend the centralizer of $\pi_{k-1}(E)$ in $\bar G_k$ by a new letter $y_E$ and add commutativity relations that make the group $H_1$ obtained from $G_{k}$ by these centralizer extensions commutative transitive.  The fundamental group $H_0$ of the graph of groups corresponding to rigid subgroups in $D$ and edges between them is embedded
in $H_1$.

Step 2. Let $A$ be a non-cyclic abelian vertex group in $D$ and
$A_e$ the subgroup of $A$ generated by the images in $A$ of the edge
groups of edges adjacent to $A$. Then $A = Is(A_e) \times A_0$ where
$Is(A_e)$ is the isolator of $A_e$ in $A$  and $A_0$ a  direct complement of $Is(A_e)$ in
$A$. Notice, that the restriction of $\pi _{k-1}$ on $Is(A_e)$ is a
monomorphism (since $\pi_{k-1}$ is injective on $A_e$ and $A_e$ is of
finite index in $Is(A_e)$). For each non-cyclic abelian vertex group
$A$ in $D$ we extend the centralizer of $\pi _{k-1}(Is(A_e))$ in $H_1$
by the abelian group $A_0$. 

If  $\pi _{k-1}(A_e)$ and  $\pi _{k-1}(A_{e_1})$ happen to be conjugate in $H_1$  into the same abelian subgroup, then we add commutativity relations to make the group commutative transitive and will have one abelian vertex group instead of these two. We denote the resulting group by
$H_2$. 

 If the  abelian group $A_0$ has rank
$r$ then the system of equations associated with the abelian vertex
group $A$ has the following form
\begin{equation}\label{eq:barS} [y_p, y_q]=1, [y_p, {\bar d_{ej}}]=1, \ \
 \  p,q=1,\ldots ,r, j = 1, \ldots, p_e,\end{equation}
 where $y_p, y_q$ are new variables and the elements ${\bar d_{e1}}, \ldots, {\bar
d_{ep_e}}$
 are constants from $H_1$  which generate  the subgroup $\pi _{k-1}(Is(A_e))$.
 We assume that the constants ${\bar d_{ej}}$
 are given as words in the generators $g_1, \ldots, g_l$ of $\bar G_{k}$.

Step 3.  
Let $Q$ be a QH subgroup in $D$. Suppose
$Q$ is given by a presentation
$$\prod _{i=1}^{n}[x_i, y_i]p_1\cdots p_{m}=1.$$
 where  there are exactly $m$
outgoing edges $e_1,\ldots ,e_m$ from $Q$  and $\sigma(G_{e_i})=
\langle p_i\rangle $,  $\tau(G_{e_i}) = \langle c_i\rangle $ for
each edge $e_i$. We add a QH vertex $Q$ to $H_2$ by introducing
new generators and the following quadratic relation

\begin{equation}\label{QQQC}\prod _{i=1}^{n}[ x_i,
y_i](c_1^{\pi_{k-1}})^{z_1}\cdots
(c_{m-1}^{\pi_{k-1}})^{z_{m-1}}c_m^{\pi_{k-1}}=1\end{equation}
 to the presentation of $H_2$.
Observe, that in the relations (\ref{QQQC}) the coefficients in the
original quadratic relations for $Q$ in $D$ are
 replaced by their images in $\bar G_k$.
The resulting group is denoted by $H_2$.

We construct such a group for each free factor of $G_{k-1}$ and take their free product $H$. Then $H$ is the NTQ group corresponding to level $k-1$ of the fundamental sequence. Similarly, taking $G_{k-2}$ and $H$ instead of $G_{k-1}$ and $G_k$, we construct the NTQ group corresponding to level $k-2$ and so on up to level 1.

\subsection{Generic Families}\label{subsection:gen-fam}

To detect splittings of limit groups we will need  the notion of a {\em generic family} of solutions of an NTQ system in a free (or a torsion-free hyperbolic) group $G$. It is given in \cite{KMel} (Definition 22) and is very technical. To make this paper more self contained we will give a definition here, we also give it in the language of \cite{Sela2}, Definition 1.5.   We will describe the construction of particular families of solutions, called generic families, of an NTQ system which help  to detect splittings and also  imply nice lifting properties for that system.  But first we explain what kind of lifting properties we wish to have.

Let $S(X) = 1$ be a system of equations with a solution in a torsion free hyperbolic group $G$. We say that a system of equations $T(X,Y)
= 1$ is {\em compatible} with $S(X) = 1$ over $G$, if for every
solution $\bar a$ of $S(X) = 1$ in $G$, the equation $T(\bar a,Y) = 1$ also
has a solution in $G$. More generally, a formula $\Phi(X,Y)$ in
the language $L_A$ is {\em compatible} with $S(X) = 1$ over $G$,
if for every solution $\bar{a}$ of $S(X) = 1$ in $G$ there exists
a tuple $\bar{b}$ over $G$ such that the formula $\Phi(\bar a,
\bar b )$ is true in $G$.

Suppose now that a formula $\Phi(X,Y)$ is compatible with $S(X)=
1 $ over $G$. We say that $\Phi(X,Y)$ {\em admits a lift to a
generic point} of $S(X) = 1$ over $G$ (or just that it has an {\em $S$-lift} over
$G$), if the formula $\exists Y \Phi(X,Y)$ is true in the coordinate group
$G_{R(S)}$ (here $Y$ are variables and $X$ are constants from
$G_{R(S)}$). Finally, an equation $T(X,Y) = 1$, which is
compatible with $S(X) = 1$, admits a {\em complete $S$-lift} if
every formula $T(X,Y) = 1 \ \& \ W(X,Y) \neq 1$, which is
compatible with $S(X) = 1$ over $G$, admits an $S$-lift. We say
that the lift (complete lift) is {\em effective} if there is an
algorithm to decide for any equation $T(X,Y)=1$ (any formula
$T(X,Y) = 1 \ \& \ W(X,Y) \neq 1$) whether $T(X,Y)=1$ (the formula
$T(X,Y) = 1 \ \& \ W(X,Y) \neq 1$) admits an $S$-lift, and if it
does, to construct a solution in $G_{R(S)}.$

We now decribe the construction of generic families of solutions of an NTQ system. 
Consider a fundamental sequence with corresponding NTQ system $S(X,A)=1$ of depth $N$ over a torsion free hyperbolic group $G$. We construct generic families iteratively for each level $k$ of the system, starting at $k=N$ and decreasing $k$. There is an abelian decomposition of $G_k$ corresponding to the NTQ structure. Let $V_1^{(k)},\ldots,V_{M_k}^{(k)}$ be the vertex groups of this decomposition given in some arbitrary order. We construct a generic family for level $k$, denoted $\Psi(k)$, by constructing generic families for each vertex group in order. We denote a generic family for the vertex group $V_i^{(k)}$ by $\Psi(V_i^{(k)})$. If there are no vertex groups, in other words the equation $S_k=1$ is empty ($G_k=G_{k+1}*F(X_k)$) we take $\Psi(k)$ to be a sequence of growing different Merzljakov's words (defined in \cite{Imp}, Section 4.4).

If $V_r^{(k)}$ is an abelian group then it corresponds to equations of the form $[x_i,x_j]=1$ or $[x_i,u]=1$, $1\leq i,j\leq s$,  where $u\in U$ runs through generators of a centralizer in $G_{k+1}$. A solution $\sigma$ in $G_{k+1}$ to equations in these forms is called \emph{$B$-large} if there are some $b_1,\ldots,b_s$ such that for each $i$, $b_i> B$ and such that $\sigma(x_i)=(\sigma(x_1))^{b_1\ldots b_i}$ or $\sigma(x_i)=u^{b_1\ldots b_i}$, for $1\leq i\leq s$ (possibly renaming $x_1$). A generic family of solutions for an abelian subgroup $V_r^{(k)}$ is a family $\Psi(V_r^{(k)})$ such that for each $B_i$ in any increasing sequence of positive integers $\{B_i\}_{i=1}^{\infty}$ there is a solution in $\Psi(V_r^{(k)})$ which is $B_i$-large.

If  $V_r^{(k)}$ is a QH vertex group of this decomposition, let $S$ be the surface associated to  $V_r^{(k)}$. We associate two collections of non-homotopic, non-boundary parallel, simple closed curves $\{b_1,\ldots b_q\}$ and $\{d_1,\ldots d_t\}$. These collections should have the propery that $S-\{b_1\cup\cdots\cup b_q\}$ is a disjoint union of three-punctured spheres and one-punctured Mobius bands, each of the curves $d_i$ intersects at least one of the curves $b_j$ non-trivially, and their union fills the surface $S$ (meaning that the collection $\{b_1,\ldots b_q, d_1,\ldots ,d_t\}$ has minimal number of  intersections and $S-\{b_1,\cup\cdots\cup b_q\cup d_1\cup\cdots\cup d_t\}$ is a union of topological disks).

Let $\beta _1,\ldots ,\beta_q$ be automorphisms of $V_r^{(k)}$ that correspond to Dehn twists along $b_1,\ldots b_q$ , and $\delta _1,\ldots ,\delta_t$ be automorphisms of $V_r^{(k)}$ that correspond to Dehn twists along $d_1,\ldots d_t$.
We define iteratively a basic sequence of automorphisms $\{\gamma _{L,n}, \phi _{L,n}\}$ (compare with Section 7.1 of \cite{Imp} where one particular basic sequence of automorphisms is used), which is determined by a sequence of  ($t+q$)-tuples $L=\{(p_{1,n},\dots,p_{t,n},m_{1,n},\ldots,m_{q,n})\}_{n=1}^{\infty}$

Let $$\phi _{L,0}=1$$

$$\gamma _{L,n}=\phi_{L,n-1}\delta _1^{m_{1,n}}\ldots \delta _q^{m_{q,n}}, n\geq 1$$

$$\phi _{L,n}=\gamma_{n}\beta _1^{p_{1,n}}\ldots \beta _t^{p_{t,n}},n\geq 1$$

Assuming that generic families have already been constructed for $V_i^{(k)}$, $i< r$, and for every vertex group in levels $k'>k$, and that $\Theta _k$ is a family of growing powers of Dehn twists for edges on level $k$, set $\Psi(k')=\Psi(V_{M_{k'}}^{(k')})\Theta _{k'}$ for $k<k'\leq N$ (in other words the generic family for level $k'$ is the generic family of the last vertex group at that level) and set $\Psi(N+1)=\{1\}$. Let $\pi_k:G_k\rightarrow G_{k+1}$ the canonical epimorphism. Let $\Sigma_r^{(k)}=\{\psi_{1}\cdots\psi_{r-1}|\psi_{i}\in\Psi(V_i^{(k)})\}$ be the collection of all compositions of generic solutions for previous vertex groups. We then say that $$\Psi(V_r^{(k)})=\{\mu _{L,n,\lambda_n}=\phi _{L,n}\delta _1^{\lambda_n}\ldots \delta _q^{\lambda_n}\sigma_n\pi_k\tau |\sigma_n\in\Sigma_r^{(k)}, \tau\in\Psi(k+1)\}_{n=1}^{\infty}$$ where each $\lambda_n$ is some positive integer, is a generic family for $V_r^{(k)}$ if it has the following property: Given any $n$ and any tuple of positive numbers $\overrightarrow{A}=(A_1,\ldots,A_{nt+nq+1})$ with $A_i< A_j$ for $i< j$, $\Psi$ contains a homomorphism $\mu_{n,L,\lambda_n}$ such that the tuple

\begin{align*}
\overrightarrow{L}_{n,r_n} &=(p_{1,1},\ldots ,p_{t,1}, m_{1,2},\ldots ,m_{q,2},\ldots, m_{1,n},\ldots ,m_{q,n}, p_{1,n},\ldots ,p_{t,n}, \lambda_n) \\
 &= (L_1,\ldots,L_{nt+nq+1})
\end{align*}grows faster than $\overrightarrow{A}$ in the sense that $L_1\geq A_1$ and $L_{i+1}-L_{i}\geq A_{i+1}-A_{i}$.

Finally we set $\Psi(S)=\Psi(V_{M_1}^{(1)})$ to be a generic family of solutions for the $G$-NTQ system $S(X,A)=1$. Notice $\Psi(S)$ discriminates $G_{R(S)}$. 

We proved in \cite{Imp} the following two facts. 
\begin{prop} \label{IFT} If $\Psi(W)$ is a generic family of solutions for a regular
NTQ-system $W(X,A)=1$ over a free group $F$, then for any equation
$V(X,Y,A)=1$ the following is true: if for any solution $\psi\in\Psi(W)$
there exists a solution of $V(X^{\psi},Y,A)=1$, then $V=1$ admits a
complete $W$-lift. 

If the
NTQ-system $W(X,A)=1$ is not regular, then for any equation
$V(X,Y,A)=1$ the following is true: if for any solution $\psi\in\Psi(W)$
there exists a solution of $V(X^{\psi},Y,A)=1$, then $V=1$ admits a
complete $W$-lift into corrective extensions of
$W(X,A)=1$ (see \cite{Imp} for the definition of a corrective extension). There is a finite number of these extensions and any solution of $W(X,A)=1$ factors through one of them.
\end{prop}

\section{Decision algorithm for $\forall\exists$-sentences}\label{sec:4}
 In the rest of the paper we will only consider fundamental sequences satisfying the first and second restrictions from
 \cite{KMel}, Sections 7.8, 7.9. To make this paper self-contained we recall these sections here.

 \subsection{First restriction on fundamental sequences}
 We only consider strict fundamental sequences. Recall, that, in particular, this means that we include in the fundamental sequence only such homomorphisms that
give nonabelian images of the regular subsystems of ${Q_i}=1$ on
all  levels, and the images
of the edge groups of the JSJ decompositions on all levels are
nontrivial. 
\label{spl}  The  {\em first restriction} on fundamental sequences is the following property.

Let $S(Z)=1$ be a system of equations over $F$. We can assume that it is irreducible. We construct fundamental sequences for $S(Z,A)=1$ over $F$ as in \cite{KMel}, Section 7.7.  Let
\begin{equation}\label{3.5.} F _{R(\bar U)}*F(t_1,\ldots
,t_k)=P_1*\ldots *P_q*\langle t_1\rangle *\ldots *\langle t_k\rangle \end{equation} be a reduced
 free decomposition of a maximal shortening quotient $F _{R(\bar U)}*F(t_1,\ldots
,t_k)$ modulo $F$ for ${F}_{R(S)}$ (this shortening quotient is exactly the limit group corresponding to the second from the top level  of the corresponding fundamental sequence), and $\pi : F _{R(S)}\rightarrow F _{R(
\bar U)}\ast F(t_1,\ldots ,t_{\beta _i})$ the  epimorphism.  Let $Q$ be an MQH subgroup in the
JSJ decomposition of  a free factor of ${F}_{R(S)}.$ Consider a free decomposition $\pi
(Q)=K_1*\ldots
*K_p*\langle t_{k_{j_1}}\rangle *\ldots *\langle t_{k_{j_2}}\rangle $ inherited from the free
decomposition (\ref {3.5.}) such that each of the images of the boundary elements of $Q$  can be 
conjugated into some $K_j$, and each $K_j$ has a conjugate of an image of some
boundary element.

 We cut the punctured surface $\Sigma$ corresponding to the MQH subgroup $Q$  along a  maximal collection of disjoint non-homotopic simple closed curves that corresponds to the lift of the free decomposition $\pi
(Q)=K_1*\ldots
*K_p*\langle t_{k_{j_1}}\rangle *\ldots *\langle t_{k_{j_2}}\rangle $ and are  mapped to the identity by $\pi$. Moreover, if $\Sigma '$ is a punctured surface obtained from $\Sigma$ and connected to  a rigid subgroup, then when we   adjoin disks to the  boundary components of  $\Sigma '$  that are mapped to the identity, no s.c.c. (simple closed curve) on that surface is mapped to the identity by $\pi$.  If $\Sigma '$ 
 is a punctured surface obtained from $\Sigma$ and not connected to  a rigid subgroup, then when we   adjoin disks to the  boundary components of  $\Sigma '$  that are mapped to the identity, we obtain a closed surface, and $\pi$ maps it onto a free group of maximal possible rank.
 
We require a similar property for all MQH subgroups and for all levels of the fundamental sequence.

\subsection{Second restriction on fundamental sequences} \label{rk3}

Suppose the family of homomorphisms $\sigma _1\pi _1\ldots \sigma
_n\pi _n\phi _0$ is a strict fundamental sequence, corresponding to the NTQ
system $Q(X_1,\ldots, X_n)=1:$
$$Q_1(X_1,\ldots ,X_n)=1,$$
$$\ldots $$
$$Q_n(X_n)=1$$
adjoint with free variables $t_1,\ldots, t_k$. Here the restriction of $\sigma _i$
on $F _{R(Q_i,\ldots,Q_n)}$ is the canonical automorphism on
$F _{R(Q_i,\ldots,Q_n)}$,  identical on variables from
$X_{i+1},\ldots ,X_n$ and on all free variables from the higher
levels, $\pi _i:F _{R(Q_i,\ldots,Q_n)}*F(t_1,\ldots
,t_{k_{i-1}})\rightarrow F _{R(Q_{i+1},\ldots,Q_n)}*F(t_1,\ldots
,t_{k_i}).$  The {\em dimension of the fundamental sequence} is the sum $k_1+\ldots +k_n.$

We can  suppose that all fundamental sequences that we consider
satisfy the following properties. Let $F _{R(Q_i,\ldots ,Q_n)}$ be a
free product of some factors. Then
  \begin {enumerate}
\item [1)] the images of abelian factors   under $\pi _i$  are
different factors of $F(t_{k_{i-1}+1},\ldots ,t_{k_i})$; \item [2)]
the images under $\pi _i$ of factors which are surface groups are
different  factors of $F(t_{k_{i-1}+1},\ldots ,t_{k_i})$;
\item [3)] if some quadratic equation in $Q_i=1$ has free
variables in this fundamental sequence, then these variables
correspond to some variables among $t_{k_{i-1}+1},\ldots ,t_{k_i}$,
the images under $\pi _i$ of coefficients of quadratic equations
cannot be conjugated into $F(t_{k_{i-1}+1},\ldots ,t_{k_i})$; \item
[4)] different  factors in the free decomposition of
$F _{R(Q_i,\ldots ,Q_n)}$ are sent into different factors in the free
decomposition of $F _{R(Q_{i+1},\ldots
,Q_n)}*F(t_{k_{i-1}+1},\ldots,t_{k_i}).$
\end{enumerate}

\begin{prop}\cite{KMel}
\label{pr:solutions-special-Sb}  For a system of equations $S$ over $F$ one can effectively construct a
finite set of  strict fundamental sequences  that  satisfy  the first and second
restriction above, such that every solution of $S$ factors through one of these fundamental sequences. These fundamental sequences correspond to a tree which we denote $T_{CE}(F _{R(S)})$ (canonical embedding tree).
\end{prop}

\begin{definition} We call  fundamental sequences satisfying the first and second restrictions {\em well aligned} fundamental sequences.\end{definition}

\subsection{Induced NTQ systems and fundamental sequences}\label{inher}

In this subsection we describe the construction of \cite{KMel}, Section 7.12 . Given an NTQ system $S=1$, the corresponding NTQ group $F _{R(S)}$, and the well aligned fundamental sequence of solutions, we will construct the induced NTQ group, the NTQ system, and the well aligned fundamental sequence of solutions for a subgroup $K$ of $F _{R(S)}$.

Let $S=1$ be an NTQ system over $F$:

\medskip
$S_1(X_1, X_2, \ldots, X_n,A) = 1,$

\medskip
$\ \ \ \ \ S_2(X_2, \ldots, X_n,A) = 1,$

$\ \ \ \ \ \ \ \ \ \  \ldots$

\medskip
$\ \ \ \ \ \ \ \ \ \ \ \ \ \ \ \ S_n(X_n,A) = 1$

\medskip \noindent
and $\pi _i: F_i \rightarrow F_{i+1}$   a fixed
$F_{i+1}$-homomorphism (a solution of $S_i(X_1,\ldots ,X_n)=1$ in
$F_{i+1} = {F}_{R(S_{i+1},\ldots ,S_n)},$ $F_{n+1} = G$, which is a free product of $F$ and a  free group). 
Let $K$ be a finitely generated subgroup (or $F$-subgroup) of
$F_{R(S)}.$ Then there exists a system $W(Y) = 1$ such that  $ K =
F_{R(W)}.$  We will describe here how to embed $K$ more economically into an
NTQ group $F_{R(Q)}$ such that ${F_{R(Q)}}\leq F_{R(S)}$ and assign to
$Q=1$ a fundamental sequence that includes all the solutions of
$W=1$ that factor through the  well aligned  fundamental sequence for $S=1$ that we started with.

Canonical automorphisms on different levels for $Q=1$ will be
induced by canonical automorphisms for $S=1$, mappings between
different levels for $Q=1$ will be induced by mappings for $S=1.$
 
 Without loss of generality we can suppose
that $F_{R(S)}$ is freely indecomposable modulo $F$.  The top
 system of equations $S_1(X_1,\ldots ,X_n)=1$ corresponds
to a splitting $D$ of $F_{R(S)}$. The non-QH non-abelian vertex subgroups
of $D$ are factors in a free decomposition of $\langle X_2,\ldots ,X_n\rangle .$
Consider the induced splitting of  $K$ denoted by $D_K$. This
splitting may give a free factorization $K= K_1*\ldots *K_k,$ where
$F\leq  K_1$. Consider each factor separately.  Consider $K_1$. Each edge $e$ in the decomposition of $K_1$ (induced by $D_K$ on the free factor $K_1$) that connects two rigid vertex groups is composed from  two edges $e_1$ and $e_2$ that are adjacent and both are in the orbit of the same edge $\bar e$ in the Bass-Serre tree corresponding to the graph of groups $D$. Moreover, $\bar e$ connects a rigid vertex group to an abelian vertex group in $D$, because if it connects 
a rigid vertex group to a QH-subgroup, then  a QH subgroup would appear between two rigid vertex groups instead of edge $e$. Increasing $ K_1$ by
a finite number of suitable elements from abelian  vertex groups of
$F_{R(S)}$ we join together non-QH non-abelian subgroups of $D_K$
which are conjugated into the same non-QH non-abelian subgroup of
$D$ by elements from abelian vertex groups in $F_{R(S)}.$  Moreover, we can choose these elements in abelian subgroups in such a way that their images on the next level are trivial. Indeed, each abelian subgroup $A$ is the direct product of the isolator of the edge group $A_1$ (in a primary decomposition this isolator is $A_1$ itself) and a free abelian subgroup $A_2$. This conjugating element $a$ belongs to $A_2$ and, therefore, is mapped on the next step to (the image of) $A_1$, say $a_1\in A_1$. Then $a_1^{-1}a$ is the desired element that is mapped to the identity. When we add this element to $K_q,$ instead of the  two non-QH non-abelian subgroups in $D_k$ considered above we have one non-QH non-abelian subgroup. This way we obtain a group $\bar K_1$ such that $D_{\bar K_1}$ does not
have edges between non-QH non-abelian subgroups, and generators of
edge groups connecting non-QH non-abelian subgroups to abelian
subgroups not having roots in $\langle X_2,\ldots ,X_n\rangle .$  We do the same if an abelian vertex group is connected to two rigid subgroups. Denote the group that we obtained by $\hat K_1$.

Since we will be considering only well aligned fundamental sequences,  we fix the family of s.c.c. in the QH subgroups of $\hat K_1$ that are mapped to the identity by $\pi _1$, so that the corresponding quadratic equations split into systems satisfying  the first restriction. This corresponds to a collection of s.c.c. which are the pre-images of the collection of s.c.c. in
a QH subgroup of $F_{R(S)}.$ We will add conjugating elements so that each punctured surface connected to a rigid subgroup in the  decomposition refined by splittings along these s.c.c, is connected to a unique rigid subgroup.  Now all rigid subgroups that are mapped to the same free factor on the next level of ${F}_{R(S)}$, are conjugate into one subgroup.
The images of these elements in $F _{R(S)}$ are mapped to the identity by $\pi _1$. Denote the obtained group by $\tilde K_1$.  We have  $\pi _1(K)=\pi _1(\tilde K_1).$

Now
consider separately each factor in the free decomposition of $\pi _1(K)\cap \langle X_2,\ldots
,X_n\rangle $ and enlarge it the same way. Working similarly with each
$\Gamma_i$ we consider all the levels of $S=1$ from the top to the bottom. 
We  obtain a subgroup generated by $\tilde K_1,\ldots , \tilde K_k$ and the enlarged images of them on all the levels, and   denote it by $H_1$. 

Then we repeat the whole
construction for $H_1$ in place of $K$, obtain $H_2$ and repeat the
construction again. We will eventually stop, namely obtain that
$H_i=H_{i+1}$, because every time when we repeat the construction if $H_i\neq H_{i+1}$ then there is some level $j$ such that on all the levels higher than $j$ the decompositions are the same as in the previous step and on level $j$ 
one of the following characteristics decreases:
\begin{enumerate}\item the number of free factors in the free
decomposition on  level $j$ of $H_i,$  \item if the number of free
factors does not decrease, then the number of edges and vertices of
the induced decomposition on  level $j$ of $H_i$ decreases,
\item if the number of free factors in the free decomposition on
 level $j$ of $H_i$  and the number of edges and vertices does not
decrease, then the size of the decomposition of level $j$ of $H_i$ is decreased.
\end{enumerate}
We need to recall here the following definition.
A  pair $(g, |\chi |)$ of  genus and  absolute value of the Euler characteristic of the surface corresponding to a QH subgroup $Q$ is called the {\em size} of $Q$ (and is denoted $size(Q)$).  A tuple $$size(D)= (size(Q_1),\ldots,size(Q_n))$$  of sizes of the MQH subgroups of the decomposition $D$ of a freely indecomposable group in decreasing order  is called the {\em regular size} of this decomposition.  We compare sizes left lexicographically. The abelian size  $ab(D)$ of $D$ is the sum of the ranks of abelian vertex groups
in $D$ minus the sum of the ranks of the edge groups for the edges
from them.  The size of $D$ is a pair $size(D), ab(D)$. 

 We end up with an NTQ system
$Q=1$ such that $K\leq F_{R(Q)}\leq F_{R(S)}$. and the image of the
top $j$ levels of $F_{R(Q)}$ on the level $j+1$ is contained in  the
image of $K$ on this level. Each QH subgroup of the induced system
is a finite index subgroup in some QH subgroup of $S=1.$ This NTQ system $Q=1$ is called the {\em induced NTQ system } and the corresponding well aligned  fundamental sequence is called the {\em induced fundamental sequence}.  The construction is algorithmic \cite{KMel}.

Similarly, if we have a fundamental sequence (and an NTQ system) modulo
a subgroup we  can define the induced fundamental sequence (and the NTQ
system)  modulo a subgroup.

\subsection{First step}\label{aex}
We will now describe the algorithm for construction of the $\exists\forall$-tree. Consider the sentence
\begin{equation}\label{ae}
\Phi=\forall  X\exists Y (U(X,Y)=1\wedge V(X,Y)\neq 1)\end{equation}

If the sentence is true then  there exists a solution of
$ U(X,Y)=1\wedge V(X,Y)\neq 1$ in $F(X)*F$ (\cite{Imp}, Theorem 4). By \cite{Mak} there is an algorithm to find such a solution $Y=f(X)$ or to say that there is no solution. If there is no such solution then  the sentence is false. Suppose there is a solution.  To check whether the sentence (\ref{ae}) is true we now have to check it only for  those values of $X$ for which  $V(X,f(X))=1.$  Denote $U_0(X)=V(X,f(X)).$ If $U_0(X)=1$ is inconsistent, then we say that the sentence is proved on the first level. 

 Let $G=\Gamma _{R(U_0)}$.
We define now a tree
$T_{EA}(\Phi)=T_{EA}(G)$ oriented from the root, and assign to each vertex of
$T_{EA}(G)$ some set of homomorphisms from $G$ to $\Gamma$. We assign the set of all homomorphisms  $G\rightarrow F$ to the initial vertex $w_{0}$.
 We can
construct algorithmically a finite number of NTQ systems  corresponding to branches $b$ of the canonical $Hom$-diagram described in Section \ref{can_diagram}  (denote the system corresponding to the branch $b$ by $S(b)$), their correcting extensions: $S_{corr}(b)=1$
($S_{corr}(b):S_1(X_1,\ldots ,X_n)=1,\ldots ,S_n(X_n)=1$), and corresponding  well aligned
fundamental sequences $Var_{\rm fund}(S(b))$. For each such fundamental
sequence we assign a vertex $w_{1,i}$ of the tree $T_{EA}(\Phi)$. We draw an edge from
 vertex $w_{0}$  to each vertex $w_{1,i}$ corresponding to $Var_{\rm fund}(S(b)).$

 Let the fundamental sequence 
$Var_{\rm fund}(S(b))$ be assigned to $w_{1,i}$. Since every branch of the tree will be constructed independently of the others we will now describe the construction for $Var_{\rm fund}(S(b))$. Denote $N_0=F_{R(S(b))}$. If the  image of $G$ in $N_0$    is a proper quotient of $G$ then we replace  $Var_{\rm fund}(S(b))$ by the disjunction of the fundamental sequences for this proper quotient. Suppose the image of $G$ is isomorphic to $G$. By  the
parametrization theorem (\cite{Imp}, Theorem 12)  Player A can find
values of $Y$ given by  formulas $Y=f(X_1,\ldots ,X_n)$ in 
$X_1,\ldots ,X_n$ taking values in  ${\Gamma}_{R(S(b))}$. Player B replies that sentence (\ref{ae}) is now verified for all values of $X$ except those for which 
  we have $V(X_1,\ldots ,X _n, f(X_1,\ldots ,X_n))=1.$  

We say that a fundamental sequence is constructed modulo some subgroups of the coordinate group  if these subgroups are elliptic in the JSJ decompositions on all levels in the construction of this fundamental sequence.
\begin{definition} \label{minhom} Let $S=1$ be a canonical NTQ system corresponding to a well-aligned fundamental sequence for a group $L$ relative to  limit groups  $L_1,\ldots ,L_k$:

\medskip
$S_1(X_1, X_2, \ldots, X_n,A) = 1,$

\medskip
$\ \ \ \ \ S_2(X_2, \ldots, X_n,A) = 1,$

$\ \ \ \ \ \ \ \ \ \  \ldots$

\medskip
$\ \ \ \ \ \ \ \ \ \ \ \ \ \ \ \ S_n(X_n,A) = 1$

 We say that values of $X_2,\ldots, X_n$ are {\em minimal with respect to the images of $L$} if for each $i>1$
values of $X_i,\ldots ,X_n$ are minimal in fundamental sequences for the groups $F_{R(S_i,\ldots, S_n)}$ modulo rigid subgroups and edge groups of the decomposition of the image of $L$ on level $i-1$ from the top. In particular, values of $X_2,\ldots, X_n$
are minimal in fundamental sequences for $F_{R(X_2,\ldots, X_n)}$ modulo rigid subgroups and edge groups of the image of $L$ in $F_{R(X_1,\ldots, X_n)}$.
\end{definition}

Player A replies that we will only consider values of $X_1,\ldots ,X_n$ satisfying this system that are minimal in $Var_{fund}(S(b))$ with respect to the images of $G$. This is enough because if for a specialization $X^{\phi}$ there exists a minimal specialization of $X_1^{\phi},\ldots ,X_n^{\phi}$ that does not satisfy this system, then the sentence is true for $X^{\phi}$. 

Let $Var_{\rm
fund}(U_1)$ be the subset of homomorphisms from the set $Var_{\rm
fund}(S(b))$ going through the corrective extension 
$S_{corr}(b)=1$, minimal with respect to the canonical automorphisms modulo the images of $G$ and satisfying the additional equation $V(X_1,\ldots ,X _n, f(X_1,\ldots ,X_n))=1.$   If this equation does not have any solution, then the sentence is proved on level 2.

Let $K$ be a finitely generated group. Recall that any family of homomorphisms $\Psi=\{\psi _i:K\rightarrow F \}$ factors through a finite set of maximal fully residually free groups $H_1,\ldots , H_k$ that all are quotients  of $K$.  We first take a quotient $K_1$ of $K$ by the intersection of the kernels of all homomorphisms from $\Psi$, and then construct maximal fully residually free quotients $H_1,\ldots , H_k$ of $K_1$. We say that $\Psi$ {\em discriminates} groups  $H_1,\ldots , H_k$, and that each $H_i$ is {\em a fully residually free group discriminated by $\Psi .$}

 Let $G_1$ be a fully residually free group discriminated by the set of homomorphisms
$Var_{\rm fund}(U_1)$. $G_1=F_{R(U_1)}$ for an irreducible system $U_1(X_1,\ldots ,X_n)=1.$ Consider the family  of  well-aligned fundamental
sequences for $G_1$  modulo the images $R_1,\ldots, R_s$ of the factors in the free  decomposition of the subgroup $H_1=\langle X_2,\ldots, X_n\rangle $.  We know the generators and relations of  $R_1,\ldots, R_s$ and  can effectively construct these fundamental sequences.
 Since we only consider well aligned canonical fundamental sequences $c$ for $G_1$ modulo $R_1,\ldots, R_s$  (corresponding to coefficients of quadratic
 equations $S_1=1$ of the top level for $S_{corr}(b)=1$)
they have, in particular, the following properties: (1) they have dimension less than or equal to
 $k_1$,  (2)  $c$ is consistent with the decompositions of surfaces corresponding to
 quadratic equations of $S_1$  by a collection of simple closed curves mapped to the identity by $\pi _1$. Namely, if we refine
 the JSJ decomposition of $G$ by adding splittings
corresponding to the simple closed curves that are mapped to the identity when $G$ is mapped to the free product in Subsection \ref{spl}, then the
standard coefficients on all the levels of $c$ are images of
elliptic elements in this decomposition.
 
 If $G$ is not embedded  in the block-NTQ group discriminated by the
fundamental sequence, then we  replace $G$ by its proper quotient that is embedded and continue this branch by replacing $G$  with this proper quotient.  

Now  we assume that $G$ is embedded in the NTQ group corresponding to the fundamental sequence $c$.

Suppose a fundamental sequence $c$ has the top dimension component
$k_1$. If the NTQ system corresponding to the top level of the sequence
$c$ is the same as $S_1=1$, we extend the fundamental sequences
modulo $R_1,\ldots ,R_s$ by canonical fundamental sequences  for
$H_1$  modulo  the factors in the free decomposition of the subgroup
$\langle X_3,\ldots ,X_n\rangle $. If such a sequence has dimension greater than
or equal to $k_2$, then the corresponding solution can be factored
through a fundamental sequence for $U=1$ of
 the greater dimension.
So we only consider such sequences of dimension less than or equal
to $k_2$. If the sum of first two dimensions is strictly smaller
than $k_1+k_2$, we do the same as in the case when the first dimension
is smaller than $k_1$ (see below). We continue this way to construct
fundamental sequences $Var_{\rm fund}(S_1(b))$. 

 Suppose now that the fundamental sequence $c$ for $G_1$ modulo
$R_1,\ldots ,R_s$ has dimension strictly less than $k_1$ or has
dimension $k_1$, but the NTQ system corresponding to the top level of
$c$ is not the same as $S_1=1$. Suppose also that $G\neq F$. Then we
use the following lemma (in which we suppose that $R_1,\ldots ,R_s$
are non-trivial).

\begin{lemma}\label{prop}The image $G_{t}$ of  $G$ in
the group $H_{t}$ appearing on the terminal level $t$ of the sequence $c$ is a
proper quotient of $G$ unless $G$ is a free group.
\end{lemma}
\begin{proof} Consider the terminal group of $c$; denote it $H_t$.
Suppose $G_{t}$ is isomorphic to $G$. Denote the abelian JSJ
decomposition of $H_t$ by $D_t$. Then there is an abelian
decomposition of $G$ induced by $D_t$. Therefore rigid (non-abelian
and non-QH) subgroups and edge groups of
$G$ are elliptic in this decomposition.

There exists a decomposition of some free factor $P_i$ of $H_t$ which is induced from
$D_t$. But this is impossible because this  means that
the homomorphisms we are considering can be shortened by applying
canonical
automorphisms of $P_i$ modulo those subgroups from $\{R_1,\ldots ,R_s\}$ which are conjugated into $P_i$ (since $V_{fund}(U_1)$  contains only homomorphisms minimal  in fundamental sequences modulo  $R_1,\ldots ,R_s$ (see Def. \ref{minhom}), these subgroups must be also elliptic in $D_t$).

\end{proof}

Therefore the image  of $G$ on all the levels of the fundamental sequence $c$ above some level $p$ is isomorphic to $G$ and on level $p$ is a proper quotient of $G$.  We can effectively find at which level this happens. Indeed, by \cite{KMel}, Theorem 29 we can find a presentation for the image of $G$ on each level and by \cite{KMel}, Theorem 31  we can decide if the epimorphism from $G$ to this image is proper. We can effectively find the set of canonical fundamental sequences and NTQ groups for $G_p$.   Denote the complete set  of fundamental sequences that encode all the homomorphisms  $G_p\rightarrow F$ by $\mathcal F$. One can extract from $c$ modulo 
level $p$
 the induced  well aligned fundamental sequence for $G$.  Denote this induced fundamental sequence up to level $p$ by $c_2$.   Consider a fundamental sequence $c_3$ that consists of homomorphisms obtained by the composition
  of a homomorphism from $c_2$ and from a fundamental sequence
$b_2\in \mathcal F.$ Let $\bar N_1$ be the  NTQ group corresponding to $c_3$.

Denote by $M(X,Z_1)$ (where $Z_1$ is the set of generators, $X\subset Z_1$) the group generated by  the top $p$ levels of the  NTQ group corresponding
 to the fundamental sequence $c$.  Consider the {\em block-NTQ group}  $N_1$ that is a fully residually free quotient 
 of the amalgamated product of $M(X,Z_1)$  and  the group $\bar N_{1corr}$ that is a  corrective extension of $\bar N_1$ corresponding to the system $U(X,Y)=1\wedge V(X,Y)\neq 1$,   amalgamated  along the top $p$ levels of $\bar N_{1corr}$. To obtain a fully residually free quotient we   add relations to make it commutative transitive.  Assign the  sequence $c_3$, block-NTQ group $N_1$, groups $\bar N_{1corr}$ and $M(X,Z_1)$ to the vertex $w_{2,i,k}$ of the tree $T_{EA}(\Phi)$.
 We draw an edge from the vertex $w_{1,i}$ of $T_{EA}(G)$  to $w_{2,i,k}$.   
  
 One can apply the parametrization theorem  (\cite{Imp}, Theorem 12) to  the NTQ group $\bar N_1$ and get a formula solution of $U(X,Y)=1\wedge V(X,Y)\neq 1$ in $\bar N_{1corr}$.     Those formula solutions for which $V(X,Y)=1$ 
 (if exist) will give an additional equation $U_2=1$ for generators of  $N_1$. Moreover,  we can suppose that $G_2=F_{R(U_2)}$ is discriminated by homomorphisms that are minimal for $\bar N_1$ with respect to images of $G$ and minimal  for 
$M(X,Z_1)$ with respect to images of $F_{R(U_1)}$.  (We call this assumption the {\em minimality restriction}). We assign groups $F_{R(U_2)}$ to vertices $\hat w_{2}$ (with different subscripts).

\subsection{Second step}\label{2nd}
We will describe the next  step in the construction of $T_{EA}(\Phi )$
which basically is general. Fundamental sequences and block-NTQ groups obtained on the second step will be assigned to vertices $w_{3}$ (with additional subscripts) of the tree $T_{EA}(\Phi)$.

The case when  the natural image  $G^{(2)}$ of  $G$ in $G_2$
 is a proper quotient of  $G$ is the first ``easy'' case.   By \cite{KMel}, Theorem 31, there is an algorithm to determine if this is the case.
 If $G^{(2)}$ is a proper quotient of $G$, we continue the construction along this branch as on the first step but with $G$ replaced by $G^{(2)}$.
 In this case we assign
 to vertices $w_{3}$ the fundamental sequences constructed as in Proposition \ref{pr:solutions-special-Sb}  for $G^{(2)}$ (corresponding to the branches of $T_{CE}(G^{(2)})$) and draw edges from $ w_{2}$ to these vertices.   In all the other considerations below we suppose that $G^{(2)}$ is
isomorphic to  $G$.

Suppose  the JSJ decomposition for the NTQ system corresponding to the top level of $c$ corresponds to
the equation $S_{11}(X_{11},X_{12},\ldots )=1;$ some of the
variables $X_{11}$ are quadratic, the others correspond to
extensions of centralizers.
 Construct a canonical fundamental sequence $c^{(2)}$  for $G_2$ modulo  the factors in the free decomposition of the subgroup generated by
$X_{12},\ldots  $.

Denote by $N_0^1$ the image of the subgroup generated by $X_1,\ldots
,X_n$ in the group $M(X,Z_1)$ discriminated by $c$. So, $N_0^1= \langle X_1,\ldots
,X_n\rangle _c.$ Denote by $N_0^2=\langle X_1,\ldots ,X_n\rangle _{c^{(2)}}$ the image of
$\langle X_1,\ldots ,X_n\rangle $
 in the group discriminated by $c^{(2)}$. If  $N_0^2$ is a proper quotient of $N_0^1$ 
 and $c$ is not the same as the top level of $S(b)$, we have another ``easy'' case.
 In this case we do what we did on the previous step, taking $N_0^2$
instead of $G_2$, assigning its well aligned fundamental sequences to $\hat v_{2ijk}$, and we do not need to consider vertices corresponding to NTQ systems with the same top level as the NTQ system for $c_3$ because the complexity (see the definition in Section \ref{5.6}) of $c^{(2)}$   was already less than the complexity of $c_3$. 
 
In all the cases below we suppose that $N_0^1$ is isomorphic to
$N_0^2$.

{\bf Case 1.} If the top levels of $c$ and $c^{(2)}$ are the same, then
we go to the second level of $c$ and consider it the same way as the
first level.

{\bf Case 2.}  If the top levels of the NTQ system for $c$ and $S_1$ are the same
(therefore $c$ has only one level). We work with $c^{(2)}$ the same
way as we did for $c$. Suppose $c^{(2)}$ is not the same as $c$.   Then  the image of $G$ on some level
$p$ of $c^{(2)}$ is a proper quotient of $G$ by
Lemma \ref{prop}. Let $M(X,Z_2)$ be the NTQ group corresponding to the top $p$ levels of $c^{(2)}$. We consider fundamental sequences constructed as follows: the top part is the fundamental sequence induced by the top part of $c^{(2)}$ 
above level $p$ for $G$, and the bottom part is a fundamental sequence for this quotient of $G$ (solutions will go along the first fundamental sequence from the top level to level $p$ and then continue along one of the fundamental sequences for the image of $G$ on level $k$).  We assign each of these fundamental sequences to a vertex  $w_{3,j,k,s}$, then take the corresponding canonical NTQ group $\bar N_2$. Then we construct the block-NTQ group as we did on the first step, denote it by $N_2$. We also assign $N_2$ and $\bar N_{2corr}$ to the vertex  $w_{3,j,k,s}$.

{\bf Case 3.}  If the top levels of the NTQ system for $c$ and $S_1$ are  not  the same and
the top levels of $c$ and $c^{(2)}$ are not the same, then we look at  $N_0^2$ and $N_0^1$.  

If   $N_0^2$ is isomorphic to $N_0^1$ then it follows from the minimality restriction made for the solutions used to construct $F_{R(U_2)}$ that there is some
 level $k$ from the top of $c^{(2)}$ such that we can suppose that the image of
either $G$ or $N_0^1$ on this level is a proper quotient of it (and on the levels above k is isomorphic to $G$ (resp. to $N_0^1$). 

If the image of $G$ that we denote $G_t$ is a proper quotient of $G$ we do what we did on step 1. Namely, we construct the fundamental sequence for  $G$ induced from $c^{(2)}$ and continue it with a canonical fundamental sequence for $G$, construct NTQ group $\bar N_2$ for this fundamental sequence and the block NTQ group $N_2$ and assign them to a vertex $w_3$.
 
Suppose now that $G_t$ is isomorphic to $G$ and $N_{0,t}^1$ is a proper quotient of $N_0^1$. Consider
fundamental sequences for $N_{0,t}^1$ modulo the images of subgroups $R_1,\ldots, R_s$, and apply to them step 1. Denote the obtained fundamental sequences  by $f_i$.
Construct fundamental sequences for the subgroup generated by the images of 
$X_1,\ldots ,X_n$ with the top part being induced from the top
part of $c^{(2)}$ (above level k) and bottom part being some $f_i$,
but not the sequence with the same top part as $c$. We construct the block-NTQ group amalgamating the top $k$ levels of $c^{(2)}$ and the block-NTQ group constructed for $f_i$ as on the first step. We denote this block-NTQ group (consisting of three blocks) by $N_2$ and its subgroup that is a canonical NTQ group by $\bar N_2$. These groups  are assigned to $\hat v_2.$ 

If the sentence is true there exists a formula solution over the correcting extension of the group $\bar N_2$.

{\bf Case 4.} If all the levels of $c$ and $c^{(2)}$ are the same, so we never have cases 2 and 3,  and the same happens on all levels of the block-NTQ group $N_1$ (see step 1), then 
the block fundamental sequence  consists of a sequence of induced fundamental sequences
for $G$ and its images. Denote it $\overline c_2$. For each level of $\overline c_2$ there is an abelian decomposition. 
 Denote by $G_{corr}$ the image of the corrective extension of the NTQ group corresponding to the fundamental sequence induced by $G$ and its images with which we came to this step (that was denoted by $\bar N_{1corr}$).  Denote  the fundamental sequence induced from $c$ and its continuation by $G_{corr}$  by $c_4$.   The group $G_{corr}$ is not embedded into the block-NTQ group $N_1$.
Therefore there exists some level $k$ such that the abelian decompositions for the NTQ group  for $c_4$ will coincide with abelian decompositions for  the NTQ group for $\overline c_2$  for levels above $k$, and on level $k$ either the number of free factors in the free decomposition  for $c_4$ is less than this number for   $\overline c_2$, or the number of factors is the same, but the  regular size of the decompositions (lexicographically ordered tuple $(size (Q_1),\ldots, size (Q_m))$  of sizes of  MQH subgroups)  for $c_4$ is  smaller  than that  for $\overline c_2$, or the regular sizes are the same but the abelian size $ab$ of the decompositions  for $c_4$ is  smaller  than that for $\overline c_2$.  Here, if $R$ is an abelian decomposition,
by $ab(R)$ (the abelian size) we denote the sum of the ranks of abelian vertex groups
in $R$ minus the sum of the ranks of the edge groups for the edges
from them. 

We will take $c_4$ to the next step instead of $\overline c_2$.

\subsection{General step}\label{nth}
We now describe  the $n$'th step of the construction. Denote by
$N_i$ the block-NTQ group constructed on the $i$'th step (and assigned to some vertex $w_{i+1}$ with subscripts), and by
$N_i^j, j> i$ its image  on the $j$'th step. Denote the image of $G$ on step $i$ by $G^{(i)}.$
Fundamental sequences and block-NTQ groups obtained on  step $n-1$ are assigned to vertices $w_{n,j,k,s\ldots }$ of the tree $T_{EA}(G)$. 

Let $\{j_k,\
k=1,\ldots ,s\}$ be all the indices for which the top level of
$N_{j_k+1}$ is different from the top level of $N_{j_k}$. Let $M(X, Z_i)$ be the group corresponding to the top block on step $i$ and $M^j(X, Z_i)$ its image in $N_{j-1}^j.$

The first ``easy" case is when  $G$  (or its proper quotient if we are already working with a proper quotient of $G$ on step $n-1$) is not embedded
into $N_{n-1}^n$, then we assign to vertices $w_{n+1}$ (with subscripts)  well aligned fundamental sequences for $G^{(n)}$ and  their canonical NTQ groups.

On each step $i$ we consider fundamental sequences for the proper quotient of $N_{i-1}$
 modulo the images of freely indecomposable free factors  of the second level block NTQ group $N_{i-1}$ (images of rigid subgroups in the relative decomposition of $M(X, Z_{i-1})$). Let $R^{r(i)}$ be the family of these rigid subgroups on step $i$. Every time when we increase  parameter subgroups $R^{r(i)}$, $r(i)$ increases by 1.  Let $g(t)$ be the last step  when  we constructed fundamental sequences modulo $R^t$. On step $g(t)+1$ parameters subgroups are increased to become $R^{t+1}$ and fundamental sequences for the top block are constructed modulo $R^{t+1}$, so $r(g(t)+1)=t+1.$

The second ``easy" case  appears when $G$ is isomorphic to $G^{(n)}$ but for some $g(r)<n,$ $M^n(X, Z_{g(r)})$ is a proper quotient of $M^{g(r)}(X, Z_{g(r)})$.  Let $r$ be the smallest such index. Then we assign this proper quotient to $w_{g(r)+1}$ and do not continue along the branch originating from $w_{g(r)+1}$ and arriving at $w_n$. Instead, we generate another branch originating from $w_{g(r)+1}$  and work along this branch  with $M^n(X, Z_{g(r)})$ assigned to $w_{g(r)+1}$. Working   along this branch we only consider fundamental sequences for $M^{g(r)}(X, Z_{g(r)})$ modulo increased parameter subgroups because they were increased previously on step $g(r)+1$, therefore
on step $g(r)+1$ solutions were already not of maximal complexity.
 In all other cases
we can suppose that  $G$ and all the groups $M^{g(r)}(X, Z_{g(r)})$, $g(r)<n,$ are
embedded into $N_{n-1}^n.$

{\bf Case 1.} The top levels of $c^{(n)}$ and $c^{(n-1)}$ are the
same. In this case we go to the second level and consider it the
same way as the first level.

{\bf Case 2.} The top levels of $c^{(n-1)}, c^{(n-2)},\ldots ,
c^{(n-i)}$ are the same, and the top levels of $c^{(n-1)}$ and $c^{(n)}$
are not the same. Then on some level $p$ of the NTQ group for $c^{(n)}$ we can suppose
that the image of $M^n(X,Z_{n-1})$ is a proper quotient of it (and on the levels above $p$ it is  isomorphic to $M^n(X,Z_{n-1})$). Let $r$ be the minimal such index that  $M^n(X,Z_{g(r)})_t$ is a proper quotient of $M^n(X,Z_{g(r)})=M^{g(r)}(X,Z_{g(r)})$. Then the levels below the first level (level 2 and below) of the block NTQ group $N_n$ that we are constructing on step $n$  will correspond to the block NTQ group $\tilde N_{g(r)+1}$ that we would construct on step $g(t)+1$ for $M^n(X,Z_{g(r)})_t$.  Moreover, we consider only fundamental sequences for $\tilde N_{g(r)+1}$ with the top level different from $c^{(g(r))}$ (not of maximal complexity). 

{\bf Case 3.} The top levels of $c^{(n-2)}$ and $c^{(n-1)}$ are not
the same and the top levels of $c^{(n-1)}$ and $c^{(n)}$ are not the
same. Then on some level $p$ of $c^{(n)}$ the image of
$M^n(X,Z_{n-1})$  is a proper quotient of $M^n(X,Z_{n-1}).$  Let $r$ be the minimal such index that  $M^n(X,Z_{g(r)})_t$ is a proper quotient of $M^n(X,Z_{g(r)})=M^{g(r)}(X,Z_{g(r)})$. Then the levels below the first level (level 2 and below) of the block NTQ group $N_n$ that we are constructing on step $n$  will correspond to the block NTQ group $\tilde N_{g(r)+1}$ that we would construct on step $g(t)+1$ for $M^n(X,Z_{g(r)})_t$.  Moreover, we consider only fundamental sequences for $\tilde N_{g(r)+1}$ with the top level different from $c^{(g(r))}$ (not of maximal complexity).

{\bf Case 4.} 
If going from the top to the bottom of the block-NTQ system, on some level of the block-NTQ group we can apply Case 2 or 3 to this level we apply it.

If going from the top to the bottom of the block-NTQ system we do not obtain  Cases 2, 3 and all the levels  of $N_{n-1}$ and $N_n$ are the same, we  
consider the group $G_{n, corr}$ which is the image of the group that was constructed for the induced
fundamental sequence corresponding to the homomorphisms from $G$
going through $N_{n-1}$. Let $G_n$ be the NTQ group for that induced fundamental sequence, so that $G_{n,corr}$ is the image of a corrective extension for $G_n$.  We construct the fundamental sequence $d$ induced by $G_{n,corr}$  from $N_{n}$ (from $c^{(n)}$ and its continuation) as we did on the second step when cases 2 and 3 were not applicable on all the levels of $c^{(2)}$.   There exists some level $k$ such that the abelian decompositions for the NTQ group  for $d$ will coincide with abelian decompositions for $G_{n}$  for levels above $k$, and on level $k$ either the number of free factors in the free decomposition  for the NTQ group for $d$ is less than this number for  $G_{n}$ , or the number of factors is the same, but the  regular size of the decomposition is  smaller  than that  for $G_{n}$, or the regular sizes are the same but the abelian size $ab$  is  smaller  than that for $G_{n}$. The NTQ group corresponding to  d will be denoted $\bar N_n$. We will go to the next step with
$N_n$, $\bar N_{ncorr}$ and the fundamental sequence $d$.

On  each step of  the construction of the tree $T_{EA}(\Phi)$  if we can apply the first easy case we apply it. If we cannot apply the first easy case but can apply the second easy case we apply it. Otherwise we apply one of the other cases 

\subsection{The $\exists\forall$ tree is finite}\label{5.6}

It is convenient to define  the notion of complexity of a fundamental sequence ($Cmplx (Var_{fund})$) at follows:
$$Cmplx(Var_{\rm fund}) =$$ $$(dim (Var_{\rm fund}) + factors (Var_{\rm fund}), (size (Q_1),\ldots ,size (Q_m)), ab(Var _{\rm fund}(Q)),$$
where $factors (Var_{\rm fund})$ is the number of freely indecomposable, non-cyclic terminal factors (this component only appears in fundamental sequences relative to  subgroups),  and   $(size (Q_1),\ldots, size (Q_m))$ is the regular size of the system. The complexity is a tuple of numbers which we compare in the left lexicographic order. (Sela calls
 $dim (Var_{\rm fund}) + factors (Var_{\rm fund})$ the Kurosh rank of the resolution.)

In this subsection we will prove the following result.

\begin{theorem} \label{AE} The tree $T_{EA}(\Phi)$ and the algorithm for its construction  are finite.\end{theorem}
Proof.
 Let an NTQ system $Q(X_1,\ldots, X_n)=1$ have the form
$$Q_1(X_1,\ldots ,X_n)=1,$$
$$\ldots $$
$$Q_n(X_n)=1,$$
where $Q_1=1$ corresponds to  the top level of a JSJ decomposition for
$\Gamma _{R(Q)}$, variables from $X_1$ are either quadratic, or correspond
to extensions of centralizers. Consider this system together with the 
fundamental sequence $V_{\rm fund}(Q)$ defining it. Let $V_{\rm
fund}(U_1)$ be the subset of $V_{\rm fund}(Q)$ satisfying some
additional equation $U_1=1$, and $G_1$ a group discriminated by this
subset. Consider the family  of those canonical fundamental
sequences for $G_1$ modulo the images $R_1,\ldots ,R_s$ of the
factors $P_1,\ldots ,P_s$ in the free decomposition of
 the subgroup $\langle X_2,\ldots ,X_m\rangle $, which have the same Kurosh rank modulo them as
 $Q_1=1$. Denote this free decomposition by $H_1*$. The terminal group of such a fundamental sequence does not have a sufficient splitting relative to  $H_1*$.

Denote such a fundamental sequence by $c$, and the corresponding NTQ
system $S=1 (mod \ H_1*)$, where $S=1$ has the form
$$S_1(X_{11},\ldots ,X_{1m})=1$$
$$\ldots $$
$$S_m(X_{1m})=1.$$

Denote by $D_Q$ a canonical decomposition corresponding to the group
$F _{R(Q)}$. Non-QH, non-abelian subgroups in this decomposition are
$P_1,\ldots ,P_s$. Abelian and QH subgroups correspond to the system
$Q_1(X_1,\ldots ,X_n)=1.$ For each $i$ there exists a canonical
homomorphism
$$\eta _i: F _{R(Q)}\rightarrow F _{R(S_i,\ldots ,S_m)}$$
such that $P_1,\ldots ,P_s$ are mapped into
rigid subgroups in the canonical decomposition of $\eta
_i(F _{R(Q)})$.

Each QH subgroup in the decomposition  of $F _{R(S_i,\ldots ,S_m)}$
as an NTQ group is a QH subgroup of $\eta _i(F _{R(Q)})$. By \cite{KMel}, Lemma 11,
for each QH subgroup $Q_1$ of $\eta
_i(F _{R(Q)})$ there exists a QH subgroup of $F _{R(Q)}$ that is
mapped into a subgroup of finite index in $Q_1$. The size of this QH
subgroup is, obviously, greater or equal to the size of $Q_1$. Those
QH subgroups of $F _{R(Q)}$ that are mapped into QH subgroups of the
same size by some $\eta _i$ are called stable.

\begin{lemma}\label{ch}
In the conditions above there are the following possibilities:

(i) The set of homomorphisms going through $c$ is generic for each
regular quadratic equation in $Q_1=1$ and $ab(c)=ab(V_{\rm
fund}(Q))$ (in this case $c$ has only one level identical to $Q_1$);

(ii) It is possible to reconstruct system $S=1$ in such a way that $size\ (S) < 
size\ (Q_1)$;

(iii)  $size\ (S) = size\ (Q_1)$, $ab(c)< ab(V_{\rm fund}(Q)).$

\end{lemma}

\begin{proof}

 The fundamental sequence $c$ modulo the decomposition $H_1* $
has the same Kurosh rank as $Q_1=1$. The Kurosh rank of $Q_1=1$ is the
sum of the following  numbers: \begin{enumerate} \item [1)] the
dimension of a free factor $F_1=F(t_0,\ldots ,t_{k_0})$ in the free
decomposition of $F_{R(Q)}$ corresponding to an empty equation in
$Q_1=1$; \item [2)] the number of abelian factors;
\item [3)] the sum of dimensions of surface group factors; \item
[4)] the number of free variables of quadratic equations with
coefficients in $Q_1=1$ corresponding to the fundamental sequence
$Var _{\rm fund}(Q)$, \item [5)] $factors (Var_{\rm fund})$.\end{enumerate} Because $c$ has the same
Kurosh rank, the free factor $F_1$ is unchanged. By 1) and 2)  in
Section \ref{rk3}, abelian and surface factors are sent into
different free factors.

Let $Q_{1i}=1$ be one of the standard quadratic equations in the
system $Q_1=1.$ If the set of solutions of $Q_{1i}=1$ over $F_{R(
Q_2,\ldots Q_n)}$ that factor through the system $S=1$ is a generic family for
$Q_{1i}=1$, then by the analog of (\cite{Imp},  Theorem 9)  we conclude that $S=1$
can be reconstructed so that it contains only one quadratic equation
as a part of the system $S_m=1$. Indeed, suppose a QH subgroup
$\overline Q_{1i}$  corresponding to $Q_{1i}=1$ mapped on some level
$s$ of $S=1$ onto a subgroup of the same size. Then it is stable.
Suppose also that a QH subgroup of $F_{R(Q)}$ that is a subgroup of
$\overline Q_{1i}$ is projected on some level $k$ above $s$ into a
QH subgroup $\overline Q_k$. Then this projection is a monomorphism.
On all the levels above $s$ we can adjoin the image of a subgroup of
$\overline Q_{1i}$ to a non-QH subgroup adjacent to it (and not
count it in the size). We can adjoin the image of $\overline Q_{1i}$
to a non-QH subgroup on all the levels above $m$, and replace the
image of it on level $m$ by the isomorphic copy of $\overline
Q_{1i}$.

If all QH subgroups corresponding to  $Q_1=1$ are stable, then the
regular size of $S=1$ is the same as the regular size of $Q_1=1$ and
if $ab(c)=ab(V_{\rm fund}(Q))$, then reconstructed $S=1$ has only
one level.

The lemma  is proved.
\end{proof}

We now will finish the proof of  Theorem \ref{AE}. Recall that proper descending chains of fully residually free quotients of a finitely generated fully residually free group are finite. We will use induction on the length of  proper descending chain of fully residually free quotients of $G$ and on the complexity of $S_1(X_1,\ldots , X_n)=1$.  The induction assumption is that for any fully residually free group with  the length of  proper descending chain of fully residually free quotients less than for $G$  and for any block-NTQ system  for which the complexity of the first level is less than the complexity of $S_1(X_1,\ldots , X_n)=1$ the procedure is finite. Suppose the procedure for the construction of the $\exists\forall$-tree for $G$ is infinite. 

If we obtained the first ``easy" case along some branch,  namely the case when for some $n$ the image $G^{(n)}$ of the group $G$ is a proper quotient of $G$, then the subtree    rooted at this vertex is finite by induction. 

The second easy case appears when $G$ is isomorphic to all $G^{(n)}$ but for some $g(r)<n,$ $M^n(X, Z_{g(r)})$ is a proper quotient of $M^{g(r)}(X, Z_{g(r)})$.  Let $r$ be the smallest such index.  In this case we stop this branch and originate another branch from the vertex corresponding to step  $g(r)$  replacing the group $M^{g(r)}(X, Z_{g(r)})$ with its proper quotient $M^n(X, Z_{g(r)})$ . Moreover, since on step $g(r)+1$ the parameters were increased  and, therefore, the complexity of the fundamental sequences for $M^{g(r)+1}(X, Z_{g(r)})$ was not maximal and all fundamental sequences are well aligned,  we only consider along this new branch  fundamental sequences for $M^n(X, Z_{g(r)})$ of non-maximal complexity. Therefore the number $g(r)$ does not change. For each number $r$ 
every time we go back to step $g(r)$ we add a term to a descending chain of proper quotients of $M^{g(r)}(X, Z_{g(r)})$.  Therefore for each $r$ this step can be repeated only a finite number of times.

By Lemma \ref{ch},
every time we apply the transformation of Case 3 (we refer to the
cases from Section \ref{nth}) in the construction of $AE$-tree we
either (i) decrease the dimension in the top block, therefore decrease the Kurosh rank, or (ii) replace
the NTQ system in the top block  by another NTQ system of the same
dimension but of a smaller size, or (iii) decrease $ab(c)$. Hence the complexity defined in the beginning of this section decreases. Notice that the complexity of the top block is bounded by the complexity of $S_1(X_1,\ldots ,S_n)=1.$ Hence,
Case 3 cannot be applied infinitely many times to the top block. 

Therefore starting at some step $n_1$ for any $n>n_1$, $r(n)=r(n_1)$, the top blocks of all $N_n$ are the same and fundamental sequences modulo $R^{r(n)}=R^{r(n_1)}$  for the top block are of maximal complexity. 

Case 4  together with $N_n=N_{n_1}$ cannot appear infinitely many times for $n>n_1$. Therefore eventualiy beginning at some step after step $n_1$ we are  applying the procedure to the second block, more precisely to the terminal level of first block fundamental sequence.
If at some step $n>n_1$ $G^{(n)}_t$ is a proper quotient of $G$ then  the procedure for the second level is eventually applied to $G^{(n)}_t$ and  is finite by induction.  Otherwise 
if
we apply Case 2, we consider the second block for  proper quotients
of a finite number of groups.  Let $r<r(n_1)$ be the minimal index for which $N^n(X, Z_{g(r)})_t$ is a proper quotient of $N^{g(r)}(X, Z_{g(r)})$ for some $n>n_1.$  Moreover we only consider fundamental sequences for $N^n(X, Z_{g(r)})_t$ modulo $R^{r+1}$  that are not of maximal complexity, in particular, their complexity is less than the complexity of 
$S_1(X_1,\ldots , X_n)=1$. By induction, the procedure for $N^n(X, Z_{g(r)})_t$ is finite.

  Theorem \ref{AE} is proved.

The effectiveness of the  construction of the finite $\exists\forall$-tree for  sentence
$\Phi$ implies that the $\forall\exists$-theory of the group $F$ is decidable.
\begin{theorem}\cite{KMel} The $\forall\exists$-theory of a free group  is decidable.
\end{theorem}

\newpage
\section{Effectiveness of the global bound in finiteness results}
\subsection{Groups without sufficient splitting, algebraic, reducing and special solutions}
 In Section 5.4 of \cite{KMel} we defined the notion of a sufficient splitting of a limit group $K$ modulo a class of subgroups ${\mathcal K}$.
Let $F$ be a free group with basis $A$, $P=A\cup \{p_1,\ldots ,p_k\}$, $H=\langle P\rangle .$ Let ${\mathcal K}$ consist of one subgroup ${\mathcal K}=\{H\}.$ Let  $K=\langle X, P, A|S(X,P,A)\rangle $,  and suppose that $K$ does not have a sufficient splitting modulo $H$.    Let $D$ be an abelian JSJ decomposition of $K$ modulo $H$.

We give the notion of algebraic solutions. Let $K_1$  be a fully residually free quotient of the group $K$, $\kappa: K \rightarrow
K_1$ the canonical $\Gamma$- epimorphism that embeds terminal subgroups of $\Gamma$ participating in the construction of $K$, and $H_1 = H^\kappa$ the canonical
image of $H$ in $K_1$.
An elementary abelian splitting of $K_1$ modulo $H_1$ which does not
lift into  $K$ is called a {\em new} splitting.
\begin{definition}
\label{de:reducing} (Definition 20 \cite{KMel}) In the notation above the quotient $K_1$ is
called {\em reducing} if one of the following holds:
\begin{enumerate}
\item $K_1$ has  a non-trivial free decomposition modulo $H_1$;
\item $K_1$ has a new elementary abelian splitting modulo $H_1$.

\end{enumerate}
\end{definition}

 We
say that a homomorphism $\phi :K\rightarrow K_1$ is {\em special}
if $\phi $ either maps an edge group of $D$ to the identity  or
maps
 a non-abelian vertex group of $D$ to an abelian subgroup.

We will now define $\sim_{MAX}$-equivalent homomorphisms (that were introduced  in \cite{KMel}, Section 5.3). Let $S$ be an elementary abelian splitting of a fully residually free group $G$  relative to a family of subgroups ${\mathcal
K}$, i.e., $G=A*_{C}B$  or $G=A*_{C}=\langle A,t|c^{t}=c', c\in
C\rangle$ modulo $\mathcal K$. Suppose, for certainty, that $F \leq
A$.
 Let $\psi:G\rightarrow F$  be an $F$-homomorphism from $G$
into $F$ and  $C^{\psi} \leq \langle c_0\rangle$, where $\langle
c_0\rangle$  is a maximal abelian subgroup of  $F$. For an arbitrary
$d \in \langle c_0\rangle$ we define a homomorphism $\psi_{d}
:G\rightarrow $ as follows. If $G=A*_{C}B$ then
 $$\psi_{d}(a)=\psi (a) \ for \ a \in A, \ \ \ \psi_{d}(b)=\psi (b)^{d} \ for \  b\in B.$$
   If
$G =\langle A,t|c^{t}=c', c\in C\rangle$ then
 $$\psi_{d} (a)=\psi (a) \  for  \ a\in A, \ \ \ \psi_{d}(t)=d \psi(t).$$
By $\sim_S$ we denote the following binary relation on $Hom_F(G,F)$
(in the notation above)
 $$ \sim_S = \{(\psi,\psi_d) \mid \psi \in Hom_F(G,F), d \in \langle
 c_0\rangle \}.$$

Now let $D$ be an   abelian JSJ decomposition of $G$ modulo
${\mathcal K}$. Suppose  $M$ is an abelian vertex group in $D$. Then
$M$ is a direct product $M=M_1\times M_2,$ where $M_1$ is the
minimal direct summand of $M$ containing all the edge groups of $M$
in $D$ (so the subgroup generated by the edge groups of $M$ has a
finite index in $M_1$). Denote by $G'$ the subgroup of $G$ which is
the fundamental group of the splitting $D'$ obtained from $D$ by
removing the direct summand $M_2$ from the vertex $M$. Clearly, $G$
splits as an extension of centralizer $C_{G'}(M_1)$ of the group
$G'$ by $M_2$. We fix a basis $g_1, \ldots, g_s$ of the free abelian
group $M_2$ (if $M_2 \neq 1$).

Now let $\theta : G \rightarrow F$ be an $F$-homomorphism and
$M^{\theta} \leq \langle c_0\rangle$, where $\langle c_0\rangle$ is
a maximal cyclic subgroup of  $F$. Then for every tuple $d = (d_1,
\ldots, d_s) \in \langle c_0\rangle^s$ the map
  $$\theta_d : g_i \rightarrow d_ig_i^\theta, \ \ i = 1, \ldots,
  s$$
 extends to a homomorphism $\theta_d : M_2 \rightarrow F$.
 Now the restriction of the homomorphism $\theta$ on $G'$ and the
 homomorphism $\theta_d : M_2 \rightarrow F$ give rise to a
 homomorphism $G \rightarrow F$ which we define by the same symbol
 $\theta_d$. We refer  to the homomorphism $\psi_{d}$ and $\theta_d$
as obtained from $\psi$ and $\theta$ by {\em extended
automorphisms} or {\em fractional Dehn twists}.

By $\sim_M$ we denote the following binary relation on $Hom_F(G,F)$
(in the notation above)
 $$ \sim_M = \{(\theta,\theta_d) \mid \theta \in Hom_F(G,F), d \in \langle
 c_0\rangle^s \}.$$

We extend the relation $\sim$ of being equivalent with respect to the group of canonical automorphisms to the equivalence relation $\sim
_{AE}$ generated by $\sim$, all the binary relations $\sim_M$ where
$M$ runs over all abelian vertex groups in $D$, and all the binary
relations $\sim_S$ where $S$ runs over all elementary splittings of
$G$ corresponding to the edges of $D$.

We say that two $F$-homomorphisms $\phi, \psi \in Hom_F(G,F)$
are ${MAX}$-equivalent (and write $\phi \sim_{MAX} \psi$) if
 there exists $\theta \in Hom_F(G,F)$ such that
  $\phi \sim _{AE} \theta$ and $\theta$  coincides up to conjugation with $\psi$
   on the fundamental group of
  every connected component of the graph of groups obtained from
  $D$ by   removing from $D$ all QH-subgroups. 

Let ${\mathcal R}=\{K/R(r_1), \ldots, K/R(r_s)\}$ be a complete
reducing system for $K$ (each homomorphism from $K$ into $F$ that factors through a reducing quotient is $\sim_{MAX}$-equivalent  to a homomorphism that factors through one of them).  The existence of such system  for a free group is proved in \cite{KMel}. The algorithm to construct it was also given in \cite{KMel}.  Suppose we know NTQ groups for the system of reducing quotients of $K$ modulo $H$.  A homomorphism from $K$ onto $F$ is called   {\em
reducing}
  if there exists a solution   the $\sim_{MAX}$-equivalence class
  of $\psi$ which  factors through  one of the NTQ systems for equations $r_1= 1, \ldots, r_k = 1.$
Now we define algebraic  solutions of $S = 1$ in $F$.
  Let $\phi :H \rightarrow F$ be a
fixed $F$-homomorphism and $Sol_\phi$ the set of all homomorphisms
from $K$ onto $F$ which extend $\phi$.
A non-reducing non-special solution in $Sol_\phi$ is called
{\em $K$-algebraic} (modulo $H$ and $\phi$).


\begin{theorem}  \label{th5} Let $H\leq K$ be as above. The fact that for parameters $P$ there are exactly $N$ non-equivalent Max-classes of $K$-algebraic solutions
of the equation $S(X,P)=1$ modulo $H$ can be written algorithmically as a boolean combination of conjunctive $\exists\forall$-formulas (these are formulas of type
(\ref{AE})).
\end{theorem}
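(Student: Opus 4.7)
The plan is to write ``exactly $N$ Max-classes of $K$-algebraic solutions'' as $(\text{at least }N) \wedge \neg(\text{at least }N+1)$, and to realize each ``at least $M$'' assertion as a conjunction of conjunctive $\exists\forall$-formulas. The existential block will supply $M$ tuples $X^{(1)},\ldots,X^{(M)}$ claimed to be representatives, and the universal block will simultaneously express (a) that each $X^{(i)}$ gives a solution of $S(X,P)=1$, (b) that each is non-special, (c) that each is non-reducing, and (d) that the classes are pairwise Max-inequivalent.

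First I invoke the algorithmic inputs. By Theorems \ref{th:JSJ} and \ref{modulo} we effectively know the abelian JSJ decomposition $D$ of $K$ modulo $H$, hence finite generating sets of all edge groups and of all non-abelian vertex groups of $D$, together with a finite set of generators of the modular group $\mathrm{Mod}(D)$ of canonical automorphisms. Any $\sigma \in \mathrm{Mod}(D)$ is thus parametrized by an integer tuple $\vec t$ (powers of Dehn twists along edges and abelian-vertex coordinates), and for each generator $x$ of $K$ the element $x^{\sigma_{\vec t}}$ is an effectively computable word in $x,P$ whose exponents involve $\vec t$. In parallel, the complete reducing system $\mathcal{R} = \{K/R(r_1),\ldots,K/R(r_s)\}$ is a finite, explicit list of words $r_1,\ldots,r_s$ in the generators.

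With these ingredients each clause becomes a formula of the required shape. The ``solution'' clause is the single equation $S(X^{(i)},P)=1$. ``Non-special'' is a finite conjunction of quantifier-free inequalities $e(X^{(i)},P)\neq 1$ over a generating set of each edge group, together with $[u(X^{(i)},P),v(X^{(i)},P)]\neq 1$ for a fixed non-commuting pair $u,v$ in each non-abelian vertex group. ``Non-reducing'' is the universal statement $\forall \vec t\,\bigwedge_{i''=1}^s r_{i''}(X^{(i),\sigma_{\vec t}},P)\neq 1$. ``Max-inequivalence of classes $i\neq j$'' is the universal statement $\forall \vec t\,\bigvee_{x} x^{(i)}\neq (x^{(j)})^{\sigma_{\vec t}}$, which rewrites as a disjunction of ``$\neq 1$'' conditions. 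Combining everything and pulling all universal quantifiers into one block $\forall \vec Y$, the body of ``at least $M$'' becomes a conjunction of finitely many clauses, each of which is a disjunction of inequalities. Using that in a free group a finite system of equations is equivalent to a single equation, each disjunction of inequalities (being the negation of a conjunction of equations) is equivalent to a single $\neq 1$ condition. Hence ``at least $M$'' is equivalent to a conjunction of formulas of the form (\ref{AE}), i.e.\ a boolean combination of conjunctive $\exists\forall$-formulas. Negating for $M=N+1$ and conjoining with the $M=N$ instance yields the desired boolean combination.

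The main obstacle is the effective parametrization behind step (c)--(d): one must check that every canonical automorphism $\sigma$ that witnesses Max-equivalence (or reduction) is captured by an effectively described integer tuple $\vec t$, and that the images of the chosen generators under $\sigma_{\vec t}$ can be written uniformly as words in $X,P$ with $\vec t$ appearing only as exponents of fixed abelian/twist elements. This is precisely where the effectiveness of the relative JSJ decomposition of Theorems \ref{th:JSJ}--\ref{modulo}, and the structural description of $\mathrm{Mod}(D)$ it delivers, are used.
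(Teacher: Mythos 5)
Your overall plan — writing ``exactly $N$'' as ``at least $N$'' and ``not at least $N+1$'' and expressing each ``at least $M$'' via $\exists\forall$-formulas — agrees with the paper's strategy with the formulas $\theta_N(P)\wedge\neg\theta_{N+1}(P)$. However, there is a genuine gap in how you encode the modular-group orbit.

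You write the ``non-reducing'' and ``Max-inequivalence'' clauses with quantifiers of the form $\forall\vec t$, where $\vec t$ is an \emph{integer} tuple giving the powers of the Dehn twists, and you say the images $x^{\sigma_{\vec t}}$ are words in $X,P$ ``whose exponents involve $\vec t$.'' But such expressions ($c^{t_i}$ with $t_i$ a free integer variable, and $\forall t_i\in\mathbb{Z}$) are not first-order in the language of groups; you cannot simply ``pull these into one block $\forall\vec Y$'' of group-variable quantifiers. The paper's proof handles exactly this point by replacing the integer exponent with a \emph{group} variable $t_i$ subject to the first-order constraint $[t_i,c_i(X_2)]=1$; since the centralizer of $c_i$ is cyclic, quantifying over $t_i$ with this constraint ranges precisely over the powers $c_i^m$ of the primitive root, and the twist $x\mapsto c_i^{-m}xc_i^m$ becomes the genuine group equation $Z=X_2^{\sigma_T}$ in which $\sigma_T$ is conjugation by the $t_i$'s. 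Without this conversion, your ``universal block'' is not a formula of type (\ref{AE}) at all.

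A second, related omission: your parametrization of $\mathrm{Mod}(D)$ by integer tuples does not cover the modular automorphisms coming from QH vertex groups, which are not products of Dehn twists with fixed combinatorics. The paper deals with this by partitioning the generating set as $X=X_1\cup X_2$, with $X_2$ generating the fundamental group of $D$ after removing all QH subgroups, applying $\sigma_T$ only to $X_2$, and introducing an extra universal variable $Y$ (with the constraint $S(Y,X_2,P)=1$) to range over all possible replacements of the QH part. Your proposal treats $\sigma_{\vec t}$ as acting on the whole tuple $X^{(i)}$, so it does not capture the full Max-equivalence relation. Both of these points — the group-element encoding of the twist exponent and the $X_1/X_2$ split with the extra $\forall Y$ — are what make the paper's $\theta_N(P)$ an actual $\exists\forall$-formula in the group language, and they are missing from your plan.
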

\begin{proof} The generating set
$X$ of $K$ corresponding to the decomposition $D$ can be partitioned
 as $X=X_1\cup X_2$ such that $G=\langle X_2\cup P\rangle $ is the  fundamental group of the graph of groups obtained from $D$ by removing all QH-subgroups.
If $c_e$ is a given generator of an edge group of $D$, then we know how a generalized fractional Dehn twist (AE-transformation or extended automorphism in the terminology of \cite{KMel}, \cite{Imp}) $\sigma$ associated with edge $e$ acts on the generators from the set $X$. Namely, if $x\in X$ is a generator of a vertex group, then either $x^{\sigma}=x$ or $x^{\sigma}=c^{-m}xc^{m},$ where $c$ is a root of the image of $c_e$ in $F$, or in case $e$ is an edge between abelian and rigid vertex groups and $x$ belongs to the abelian vertex group, $x^{\sigma}=xc^{m}$. Similarly, if $x$ is a stable letter then either $x^{\sigma}=x$ or $x^{\sigma}=xc^{m}.$

 One can write elements $c_e$ as words in generators $X_2$, $c_e=c_e(X_2).$ Denote $T=\{t_i,\ i=1,\ldots ,m\}.$ Consider the  formula
\begin{multline*}
\exists X_1 \exists X_2\forall Y \forall T\forall Z \left(
S(X_1,X_2,P)=1\right.\\  \wedge \neg \left(\left.\bigwedge_{i=
1}^{m}[t_i,c_i(X_2)]=1 \wedge Z=X_2^{\sigma_T}\wedge
S(Y,X_2,P)=1\wedge V(Y,Z,P)=1\right)\right).
\end{multline*}
It says that there exists a solution of the equation $S(X_1,X_2,P)=1$ that is not Max-equivalent to a solution $Y,Z,P$ that
satisfies $V(Y,Z,P)=1$.  If now $V(Y,Z,P)=1$ is a disjunction of equations defining maximal reducing quotients, then this formula states that for
parameters $P$ there exists at least one Max-class of algebraic solutions of $S(X,P)=1$ with respect to $H$.

Denote $$\tau(T,X_2,Y,Z)=\left(\bigwedge_{i=
1}^{m}[t_i,c_i(X_2)]=1 \wedge Z=X_2^{\sigma_T}\wedge
S(Y,X_2,P)=1\wedge V(Y,Z,P)=1\right).$$The following formula states  that for
parameters $P$ there exists at least two non-equivalent Max-classes of algebraic solutions of $S(X,P)=1$ with respect to $H$.
\begin{multline*}
\theta _2(P)=\exists X_1, X_3 \exists X_2, X_4\forall Y, Y' \forall T, T',T''\forall Z,Z' \left(
S(X_1,X_2,P)=1\wedge S(X_3,X_4,P)=1\right.\\  \wedge \neg \left(\left.\tau(T,X_2,Y,Z)\vee \tau(T',X_4,Y',Z')\vee (\bigwedge_{i
=1}^{m}[{t_i}'',c_i(X_2)]=1\wedge X_2^{\sigma_{T''}}=X_4) \right)\right).
\end{multline*}

 Similarly one can write a formula $\theta _N(P)$ that states for
parameters $P$ there exist at least N non-equivalent Max-classes of algebraic solutions of $S(X,P)=1$ with respect to $H$.

Then $\theta _N(P)\wedge \neg\theta _{N+1}(P)$ states that there are exactly $N$ non-equivalent Max-classes. The theorem is proved.
\end{proof}
\subsection{A bound on the number of  Max-classes of algebraic solutions}
 In this subsection we will give a proof of the effectiveness of the global bound in Theorem 11 \cite{KMel}.
\begin{theorem}\label{term} (\cite{KMel}, Theorem 11) Let $H, K$ be finitely generated  fully residually free
groups such that $ F\leq H\leq K$ and $K$ does not have a
sufficient splitting modulo $H$. Let  $D$ be an abelian JSJ
decomposition of $K$ modulo $H$ (which may be trivial).  There
exists a constant $N=N(K,H)$
 such that for each $F$-homomorphism
 $\phi:H \rightarrow F$ there are at most $N$
algebraic  pair-wise non-equivalent with respect to $\sim _{MAX}$, homomorphisms from
$K$ to $F$ that extend $\phi$.

 Moreover, if $H,K$ are as in Theorem \ref{th5}, the constant $N$ for the number of $\sim
 _{MAX}$-non-equivalent homomorphisms
 can be found effectively.
\end{theorem}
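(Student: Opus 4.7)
The plan is to reduce the effectiveness claim to the decidability of $\mathrm{Th}(F)$, using the explicit $\exists\forall$-formulas constructed in the preceding theorem. The existence of some finite bound $N$ is already given by the non-effective part of Theorem 11 of \cite{KMel}; only the algorithmic computation of $N$ remains.

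First, I would apply Theorem \ref{modulo} to obtain an abelian JSJ decomposition $D$ of $K$ modulo $H$. From $D$ one reads off the partition $X = X_1 \cup X_2$, the words $c_e(X_2)$ expressing edge-group generators, and the prescriptions describing how each generalized Dehn twist $\sigma_T$ acts on the generators of $K$. Next, I would algorithmically produce a complete reducing system $\mathcal{R} = \{K/R(r_1),\ldots,K/R(r_s)\}$: the maximal reducing quotients of $K$ modulo $H$ are among the maximal standard quotients supplied by Theorem \ref{modulo}, and one iteratively tests each candidate for a new elementary splitting or a non-trivial free decomposition modulo the image of $H$, effective by Theorem \ref{th:JSJ}.

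Using these data, for every $N \geq 1$ one writes down the sentence $\theta_N(P)$ exhibited in the proof of the preceding theorem, which is a conjunctive $\exists\forall$-formula asserting the existence of at least $N$ pair-wise $\sim_{MAX}$-non-equivalent algebraic solutions of $S(X,P)=1$ modulo $H$. The statement that $N$ is a uniform bound on the number of $\sim_{MAX}$-classes of algebraic $F$-homomorphisms extending $\phi$, over all parameters $P$, is then encoded by the closed sentence
\begin{equation*}
\forall P\; \neg\, \theta_{N+1}(P),
\end{equation*}
which belongs to $\mathrm{Th}(F)$ if and only if the desired bound is at most $N$.

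Since $\mathrm{Th}(F)$ is decidable by \cite{Sela} and \cite{KMel}, the algorithm enumerates $N = 0, 1, 2, \ldots$ and tests each such sentence using the decision procedure, outputting the first $N$ for which the answer is positive. Termination is guaranteed by the non-effective half of Theorem 11. The main obstacle is Step 2, effectively producing the complete reducing system $\mathcal{R}$ together with explicit defining words $r_1,\ldots,r_s$; once these are in hand, writing down $\theta_N$ is mechanical and the rest is a direct appeal to decidability.
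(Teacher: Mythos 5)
Your proposal matches the paper's approach in its essentials: write a sentence asserting "there are at least $m$ pairwise non-MAX-equivalent algebraic extensions," and then, using the non-effective part of \cite{KMel}'s Theorem 11 to guarantee termination, test increasing values of $m$ against a decision procedure and output the first failure. The paper's inline formulas differ superficially from your $\theta_N$ (and the paper first treats separately the easier case where $K$ has no splitting modulo $H$ at all, where a purely existential sentence and Makanin's algorithm suffice), but the mechanism is the same.

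The one point where your write-up is looser than the paper's, and where the paper is deliberately careful, is the appeal to decidability. You invoke the decidability of the full $\mathrm{Th}(F)$. The sentence you actually need to test, $\forall P\,\neg\theta_{N+1}(P)$, is only a $\forall\exists$-sentence (equivalently its negation is $\exists\forall$), so what is really needed is only the decidability of the $\exists\forall$-fragment. The paper makes a point of this and additionally records that the proof of $\exists\forall$-decidability in \cite{KMel} (Theorem 36 and Section 12.1 there) does \emph{not} rely on Theorem 11, ruling out any hidden circularity in the argument. Your version, by invoking the full-theory decidability wholesale, does not address this; since this paper is itself devoted to making the whole quantifier-elimination pipeline (of which Theorem 11 is a key ingredient) effective, the reader could reasonably suspect a circular dependence. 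You should instead cite the $\exists\forall$-decidability directly and note, as the paper does, its independence from Theorem 11.
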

\begin{proof}  The statement about the existence of such constant $N$ is Theorem 3.5 \cite{Sela} (although there is no proof of Theorem 3.5 there).
We will show how to find this constant effectively.
To make presentation easier, we consider first the case when the group $K$ from the formulation of Theorem 11 does not have a splitting modulo $H$.
(in the terminology of \cite{Sela} it is a rigid limit group). We consider the formula
$$\exists P\exists Y_1,\ldots ,Y_m (\wedge _{i=1}^m S(P,Y_i)=1\wedge Y_i\neq Y_j (i\neq j)\wedge _{t=1}^k\wedge _{i=1}^m r_t(P,Y_i)\neq 1).$$
We know from Theorem 11 that possible number $m$ of algebraic solutions is bounded. Therefore for some  positive integer $m$ such a formula will be false. The minimal such $m$ can be found because the existential theory of $F$ is decidable. Therefore $N=m-1$.

Now we consider the case when the group $K$ has a splitting modulo $H$ but not a sufficient splitting ($K$ is solid in terminology of \cite{Sela}). This case is more complicated because we have to write  that  solutions corresponding to tuples  $Y_1,\ldots ,Y_m$  are not reducing and not in the same $\sim _{MAX}$ equivalence classes for $i\neq j$. This means that there exist no elements representing QH subgroups and no elements commuting with edge groups of the JSJ decomposition of $K$ modulo $H$ such that application of generalized fractional Dehn twists corresponding to these elements  take some of these solutions to reducing solutions or take one solution to the other.   This fact can be  expressed in terms of $\exists\forall$-sentence that is true if and only if there exists a homomorphism $H\rightarrow F$ that can be extended to $m$ algebraic and not $\sim _{MAX}$ equivalent homomorphisms $K\rightarrow F.$ The decidability of $\exists\forall$-theory of $F$ was proved in the previous section. Then the bound on $m$ can be found effectively because we can  find for which $m$ the sentence is false and therefore such homomorphism $H\rightarrow F$ does not exist.
\end{proof}

\newpage
\section{Quantifier elimination algorithm}\label{sec:6}
In this section we will prove Theorem \ref{main}. Consider the following formula

\begin{equation}\label{38neg} \Theta(P)=\exists Z \forall X\exists Y (U(A,P,Z,X,Y)=1\wedge V(A,P,Z,X,Y)\not =1),\end{equation}
where $A$ is a basis of $F=F(A)$\footnote{This formula $\Theta(P)$ is the negation of the formula $\Phi$ considered in \cite{KMel}.}. We will often skip $A$ in the formulas.

It was proved in \cite{Sela5},\cite{KMel} that every formula in the theory of a free group $F$ is equivalent to a boolean combination of $\exists\forall$-formulas. The general scheme of the proofs in \cite{Sela5} and in \cite{KMel} is quite similar: to use the so-called implicit function theorem or parametrization theorem that is \cite{Imp}, Theorem 12  (= existence of formula solutions
in the covering closure of a limit group) and to approximate any definable set and get its stratification  using certain verification process (based on the implicit function theorem) that stops after finite number of steps. But all the necessary technical results are proved differently (using actions on $\mathbb R$-trees in \cite{Sela5}, and using elimination process and free action of fully residually free groups on ${\mathbb Z}^n$-trees, which is equivalent to the existence of free length functions in ${\mathbb Z}^n$, in \cite{KMel}). The proof in \cite{KMel} is also algorithmic. It will be more convenient for us to follow our proof in \cite{KMel} and to use mostly our terminology. 

To obtain effective quantifier elimination to boolean combinations of $\exists\forall$-formulas it is enough to give an algorithm to find such a boolean combination that defines the set defined by  $\Theta(P).$

For every tuple of elements $\bar P$ for which $\Theta(\bar P)$ is true, there exists some $\bar Z$ and (by the Merzljakov theorem  \cite{KMel}, Theorem 4, a solution $Y=f(A,\bar P,\bar Z,X)$ of $U=1\wedge V\neq 1$ in $F(X)\ast F.$ All formula solutions of $U=1$ for all possible values of $P$ belong to a finite number of fundamental sequences with terminal groups $F _{R(U_{1,i})}\ast F(X),$ where $U_{1,i}=U_{1,i}(A,P,Z,Z^{(1)})$ and $F _{R(U_{1,i})}$ is a group with no sufficient splitting modulo $\langle A,P,Z\rangle$ (see Section 12.2, \cite{KMel}). These groups can be effectively found. 

We now consider each of these fundamental sequences separately.  Below we will not write the constants $A$ in the equations but assume that equations may contain constants. Those values $P,Z$ for which there exist a value of $X$ such that the equation 
$$V(P,Z,X,f(Z, Z^{(1)},P, X))=1$$ is satisfied for any function $f$ give a system of equations on $F _{R(U_{1,i})}\ast F(X).$ This system is equivalent to a finite subsystem (to one equation in the case when we consider formulas with constants).  Let $G$ be the coordinate group of this system and  $G_i, i\in J$ be the corresponding fully residually $F$ groups.

We introduced in  \cite[Section 12.2]{KMel}, the tree $T_{X}(G)$ which is constructed  the same way as $T_{EA}(\Phi)$ with $X, Y$ considered as variables and $P,Z, Z^{(1)}$ as parameters.  To each group $G_i$  we assign fundamental sequences modulo $\langle P,Z, Z^{(1)}\rangle$. Their terminal groups are groups  $F _{R(V_{2,i})}$, where
  $$V_{2,i}=V_{2,i}(P,Z,Z^{(1)}, Z_1^{(2)})$$
  that do not have a sufficient splitting modulo $\langle P,Z, Z^{(1)}\rangle$. Then we find all formula solutions $Y$ of the equation $$U(P,Z,X,Y)=1$$ in the corrective  extensions of the NTQ groups corresponding to these fundamental sequences for $X$ (see  \cite[Theorem 12]{Imp}). These formula solutions $Y$ are described by a finite number of fundamental sequences with terminal groups $F_{R(U_{2,i})},$ where $U_{2,i}=U_{2,i}(P,Z,Z^{(1)},Z_{1}^{(2)}, Z^{(2)}).$ Then again we investigate the values of $X$ that make the word $V(P,Z,X,Y)$ equal to the identity for all these formula solutions $Y$. And we continue the construction of $T_{X}(G).$ We can prove that this tree is finite exactly the same way as we proved the finiteness of the $\exists\forall$-tree.  We will call $T_{X}(\Theta)$ {\em the  parametric $\exists\forall$-tree} for the formula $\Theta(P)$. For each branch of the  tree $T_{X}$ we assign a
 sequence of groups $$F _{R(U_{1,i})},
F _{R(V_{2,i})}\ldots ,F _{R(V_{r,i})}, F _{R(U_{r,i})}$$
as in \cite[Section 12.2]{KMel}. Corresponding irreducible systems of equations are:

$$U_{1,i}=U_{1,i}(P,Z,Z^{(1)})=1,$$ which correspond to the terminal
groups of fundamental sequences describing $Y$  of level
$1$,
 $$V_{m,i}=V_{m,i}(P,Z,Z^{(1)}, Z_1^{(m)})=1,\ m=2,\ldots ,r$$
which correspond to the terminal groups of fundamental sequences describing $X$
 of level $m-1$ and 
$$U_{m,i}=U_{m,i}(P,Z,Z^{(1)},
Z_1^{(m)}, Z^{(m)})=1,\ m=2,\ldots ,r,$$ which correspond to the terminal
groups of fundamental sequences describing $Y$  of level
$m, m>1$, 
Systems $V_{m,i}=1$ correspond to vertices of $T_{X}$
that have distance $m$ to the root.

For each $m$ the group $F _{R(U_{m,i})}$ does not have a sufficient
splitting modulo the subgroup $\langle P,Z,Z^{(1)}, Z_1^{(m)}\rangle ,$ and the group
$F _{R(V_{m,i})}$ does not have a sufficient splitting modulo the
subgroup  $\langle P,Z,Z^{(1)}\rangle .$

On each step we consider terminal groups of
all levels. Below we will sometimes skip index $i$ and write $U_m=1,\
V_m=1$ instead of $U_{m,i}=1,\ V_{m,i}=1.$

\begin{prop}\label{cred}
A complete system of reducing quotients of a  limit group with no sufficient splitting modulo a subgroup can be found effectively.
\end{prop}
\begin{proof} Suppose first  that a limit group, denoted by $K$, does not have a  splitting modulo a subgroup $H$. For each reducing quotient $K_1$ of $K$ there is a discriminating family $\Psi$ of homomorphisms such that for every homomorphism $\psi\in\Psi$ the restriction $\psi _H$ can be extended  by infinitely many ways to  homomorphisms from $K_1$ to $F$. We can construct a system of equations $S=1$ such that $K=F_{R(S)}$. We can run the Elimination process described in \cite{JSJ} for $S=1$ modulo  $H$. Each homomorphism from  $H$ to $F$ can be extended by infinitely many ways to a solution of $S=1$ corresponding to a homomorphism from $K_1$ to $F$.  Among the obtained quotients of  $K$, we will obtain,  in particular, all the maximal reducing quotients of $K$ because the new splittings for quotients of $K$ will be seen in the Elimination process. Then we  have to compare the  reducing quotients that we obtain and take the maximal ones (using \cite{KMel}, Theorem 31).

Suppose now that  $K$ has a (non-sufficient) splitting modulo $H$. For each reducing quotient $K_1$ of $K$ there is a discriminating family $\Psi$ of homomorphisms such that for every homomorphism $\psi\in\Psi$ the restriction $\psi _H$ can be extended  by infinitely many ways to  homomorphisms from $K_1$ to $F$ which are minimal in their AEQ classes for $K$ (see \cite{KMel} for definition of AEQ class). We again can  take a system of equations $S=1$ in the free group $F$   such that $K=F_{R(S)}$.  We can run the Elimination process for $S=1$ modulo  $H$. Each homomorphism from  $H$ to $F$ can be extended by infinitely many ways to a solution of $S=1$ corresponding to a homomorphism from $K_1$ to $F$ that is AEQ-minimal as a homomorphism from $K$ to $F$.  We will obtain reducing quotients of $K$ and, in particular, all the maximal reducing quotients of $K$. Then we  have to compare the reducing quotients that we obtain and take the maximal ones.

\end{proof}
\subsection{Algorithm for the construction of the tree $T_X$.}
\begin{prop}\label{TX}

 There is an algorithm to construct the following: 
 \begin{enumerate}\item [1)] the finite parametric $\exists\forall$-tree $T_{X}$, \item [2)] for each branch of the tree $T_{X}$  the finite family of groups $$F _{R(U_{1,i})},
F _{R(V_{2,i})}\ldots , F_{R(V_{r,i})}, F_{R(U_{r,i})},$$ \item [3)]  for each vertex of the tree, a fundamental sequence describing either $Y$ (if the associated group is $F _{R(U_{j,i})}$) or $X$ (if the associated group is $F _{R(V_{j,i})}$).\end{enumerate}

\end{prop}
\begin{proof}

 The proof  uses the algorithm from Proposition \ref{cred} to construct a complete system of reducing quotients and the algorithm 
to construct fundamental sequences for a system of equations modulo a finite set of finitely generated subgroups. \end{proof}


\subsection{Configuration groups}

In order to show that a specialization $\bar P$ of the parameters $P$ is in the set $True(\Theta)$, one needs to find a specialization $\bar Z$ of variables $Z$ such that the corresponding $\forall\exists$-sentence
$$ \forall X\exists Y (U(\bar P,\bar Z,X,Y)=1\wedge V(\bar P,\bar Z,X,Y)\not =1)$$
 is true.  The proof that this sentence is true corresponds to a subtree of $T_X(\Theta)$ (we call it a true-subtree for $\bar P,\bar Z$). That is the proof consists of a finite sequence of formal solutions $Y$ in corrective extensions of corresponding block-NTQ groups in the vertices of this subtree.  Since $T_X(\Theta)$ is a finite tree, there is only a finite number of possibilities for  such true-subtrees of $T_X(\Theta)$.  Note that given $\bar P, \bar Z$ there may be several 
 true-subtrees for this tuple, but the number of possible true-subtrees is bounded. We will say that the sentence  associated with the tuple 
 $\bar P,\bar Z$  can be proved at level $m$ if the maximal depth of the true-subtree is $m$.
 
\begin{definition}
 Denote by $True(\Theta)_i$ the set of specializations $\bar P$ of $P$  for which there exists $\bar Z$ such that the corresponding $\forall\exists$-sentence can be proved on level $i$ (there are solutions to some $U_{i,j}=1$ but there are no remaining $X$ on level $i$ that is equivalent to the absence of solutions  to $V_{i+1,j}=1$).
\end{definition}

\begin{lemma} The set $True(\Theta)_1$ is an  $\exists\forall$-set, and there is an algorithm to write the formula defining this set.
\end{lemma}
\begin{proof}  We can write an $\exists\forall$- formula that says that there exists an index $i$, $Z, Z^{(1)}$ such that   $U_{1,i}(P,Z,Z^{(1)})=1$,
$Z^{(1)}$ is an algebraic solution, and for any index $j$ there is no specialization $Z_1^{(2)}$ such that $V_{2,j}=V_{2,j}(P,Z,Z^{(1)}, Z_1^{(2)})=1.$
\end{proof}

\begin{theorem}  \label{level2} The set $True(\Theta)_2$ is  in the Boolean algebra of  $\exists\forall$-sets, and there is an algorithm to find a  Boolean combination of $\exists\forall$-formulas defining $True(\Theta)_2$.
\end{theorem}  

The rest of the paper will be devoted to the proof of this theorem, and at the very end we will discuss the general case of 
$True(\Theta)_m$, where $m$ is bounded by the depth of the tree $T_X(\Theta).$\footnote
{ In Definition 27 and Definition 28, \cite{KMel} we define initial fundamental sequences of levels (2,1) and (2,2) and width $i$ (the possible width is bounded) modulo $P$.  In this paper we consider  both these levels as level 2. Since we are now considering the formula $\Theta$ such that $\Theta=\neg\Phi$ for the formula $\Phi$ considered in \cite{KMel}, we will slightly change the definition here. It will be more convenient to replace condition (6) from Definition 28 of \cite{KMel} by its negation and add this negation on level 2.}

\begin{definition}  \label{d2} Let $\bar P\in True(\Theta)_2$. Then there exists a family of  algebraic specializations $(\bar P,\bar Z, \bar Z^{(1)})$ for one of the groups $F _{R(U_{1,k})}$.  Let $F _{R(V_{2,1})},\ldots ,F _{R(V_{2,t})}$ be the whole family
of groups on level 1 of the proof constructed for this group $F _{R(U_{1,k})}$ ($k$ is fixed).  To construct the {\em initial
fundamental sequences of level} 2 and {\em width} $i=i_1+\ldots
+i_t$, we consider the fundamental sequences modulo the subgroup
$\langle P\rangle$ for the groups $H$ discriminated
by $i$ solutions $(\bar P,\bar Z,\bar Z^{(1)},\bar Z_1^{(2,j,s)},\bar Z^{(2,j,s)})$ of the systems
$$U_{2,m_s}(P,Z,Z^{(1)},Z_1^{(2,j,s)},Z^{(2,j,s)})=1,\ j=1,\ldots ,i_s,\ s=1,\ldots ,t,$$ with the following properties (for some $i_1,\ldots ,i_t$   such  solutions must exist):

(1) $(\bar P, \bar Z, \bar Z^{(1)})$ are algebraic solutions of $U_{1,k} (P,Z,Z^{(1)})=1$, 

\noindent
$(\bar P, \bar Z, \bar Z^{(1)},\bar Z_1^{(2,j,s)})$ are algebraic solutions of $V_{2,s} (P,Z,Z^{(1)}, Z_1^{(2)})=1$,  

\noindent
 $(\bar P, \bar Z, \bar Z^{(1)},\bar Z_1^{(2,j,s)},\bar Z^{(2,j,s)})$ are algebraic solutions of $U_{2,m_s} (P,Z,Z^{(1)}, Z_1^{(2)}, Z^{(2)})=1;$

(2) $ \bar Z_{1}^{(2,j,s)}$ are not $\sim_{MAX}$-equivalent to $\bar  Z_{1}^{(2,p,s)},
p\neq j,\ p,j=1,\ldots, i_s,\ s=1,\ldots ,t$;

(3) for any of the finite number of values of $Z_{1}^{(2)}$
the fundamental sequences for $V_{2,s}(P
,Z,Z^{(1)},Z_{1}^{(2)})=1$ are contained in the union
of the fundamental sequences for $U_{2, m_s}(P
,Z,Z^{(1)},Z_{1}^{(2,j)},Z^{(2,j)})=1$ for
different values of $Z^{(2,j,s)}$;

(4) there is no non-$\sim_{MAX}$-equivalent $\bar Z_{1}^{(2,i_{s}+1,s)}$, algebraic,
solving $V_{2,s}(P,Z,Z^{(1)}, Z_1^{(2)})=1, s=1,\ldots ,t$.

(5) the solution $(\bar P,\bar Z,\bar Z^{(1)})$  does not
satisfy a proper equation which for some $i\in J$ implies  $\overline V_i(\bar P,\bar Z, X)=1$  for any value of
$X$.

 (6)  for any $s$,  the solution $(\bar P
,\bar Z,\bar Z^{(1)},\bar Z_{1}^{(2,1,s)},\bar Z^{(2,1,s)})$  can not be
extended to a solution of some
$$V_{3,s}(P,Z,Z^{(1)},Z_{1}^{(2,1,s)},Z^{(2,1,s)},
Z_{1}^{(3,1,s)})=1.$$

We call this group $H$ a {\em configuration group}. We also call a tuple $$(\bar P, \bar Z,\bar Z^{(1)},\bar Z_{1}^{(2,j,s)},\bar Z^{(2,j,s)},\ j=1,\ldots i_s,\ s=1,\ldots ,t)$$ satisfying the conditions above {\bf a certificate} for $\Theta$ for $\bar P$ (of level 2  and width $i$). 
 We add to the generators of the configuration group additional variables $Q$ for the primitive roots of a fixed set of elements for each certificate (these are primitive roots of the images in $F$ of the edge groups and abelian vertex groups in the relative JSJ decompositions of the groups $F _{R(V_{2,1})},\ldots ,F _{R(V_{2,t})}$). \end{definition}
 
 When we consider the initial fundamental sequence of level 2 and width $i=i_1+\ldots +i_t$, we have fixed the group
 $F _{R(U_{1,k})}$, the number $i_s$ of {Max}-equivalence classes for each of the groups $F _{R(V_{2,s})}$, and the number of {Max}-equivalence classes for each $F _{R(U_{2,m_s})}$.  It follows from Theorem \ref{term} that there is only a finite number of  initial fundamental sequences of level 2. 
 
 Each group $H$ from this definition is a fundamental group of some system of equations, say $$W(P,Z,Z^{(1)},Z_1^{(2,j,s)},Z^{(2,j,s)}, Q, \ j=1,\ldots i_s,\ s=1,\ldots ,t)=1.$$

 Some generic families of specializations $(\bar P,\bar Z,\bar Z^{(1)},\bar Z_1^{(2,j,s)},\bar Z^{(2,j,s)}, \bar Q, \ j=1,\ldots i_s,\ s=1,\ldots ,t)$ that factor through the fixed  initial completed
fundamental sequence $f$ (modulo $\langle P\rangle $, with terminal group $Term(P, P^{(1)})$) of level 2 and width $i$, may fail to be certificates for $\bar P$ in one of the following ways:

1.  Specializations in a generic family for $f$ may not satisfy property (1),  instead of being algebraic they may become reducing, or two not Max-equivalent specializations may become equivalent (failing (2)). These specializations factor through a finite number of  limit groups $Red_1,\ldots ,Red _k$ (extra variables may be added to demonstrate this). By the Implicit Function Theorem the NTQ groups for them are corrective extensions of the NTQ group for $f$. A certificate for $f$ does not factor through any of 
$Red_1,\ldots ,Red _k$.

2. Specializations for which property (3) fails  factor through a finite number of limit groups with additional equations involving root variables $Q$, $RQ_1,\ldots ,RQ_m$.

3.  Specializations may not satisfy property (6), in this case they can be extended to  specializations that  factors through a finite number of limit groups of higher level $F _{R(V_{3,m_s})}$.

Notice that all the specializations that can be extended to specializations that factor through these limit groups $Red_1,\ldots ,Red _k$,  $F _{R(V_{3,m_s})}$, $RQ_1,\ldots ,RQ_m$,  are not certificates.

4. A generic family may fail property (4) in another way. For tuples $\bar P,\bar Z,\bar Z^{(1)}$ there may be more  non-$\sim_{MAX}$- equivalent solutions of the equation $V_{2,s}(P,Z,Z^{(1)}, Z_1^{(2)})=1$ than $i_s$. These tuples then factor through one of the groups $H_{surplus}$ discriminated by solutions of $W=1$ together with solutions $Z_1^{(2,i_s+1,s)}\rightarrow F$ minimal with respect to fractional Dehn twists.  But some tuples for $f$ factoring through $H_{surplus}$ may still be certificates provided they are  going through one of the fundamental sequences for which value of $Z_1^{(2,i_s+1,s)}$ is either reducing or $\sim_{MAX}$-equivalent to one of $Z_1^{(2,j,s)}, j=1, \ldots ,i_s.$

This analysis shows the following
\begin{prop}For each initial completed
fundamental sequence of level 2 and width $i$,  for each  $\bar P$ factoring through this fundamental sequence for which there exists a certificate, there are the following possibilities: 
 \begin{enumerate}\item  there exists 
 a generic family of certificates (corresponding to the fundamental sequence), 
 \item any certificate in this fundamental sequence can be extended by $Z_1^{(2,i_s+1,s)}$ so that the whole tuple factors through one of the groups $H_{surplus}$, but these variables  $Z_1^{(2,i_s+1,s)}$  factor through one of the fundamental sequences for which $Z_1^{(2,i_s+1,s)}$  is either reducing or $\sim_{MAX}$-equivalent to one of $Z_1^{(2,j,s)}, j=1, \ldots ,i_s.$\end{enumerate}
In the former case we say that the fundamental sequence has depth 1, it the latter case
we will consider fundamental sequences of depth 2  (for level 2 and width $i$).  Similarly there may be constructed fundamental
sequences of larger depth (for level $2$ and width $i$).
\end{prop} 

Completed fundamental sequences for $H_{surplus}$ correspond to  corrective extensions of the NTQ group for the completed fundamental sequence $f$,  and  have terminal groups with no sufficient splitting modulo $Term(P,P^{(1)})$ that we denote by $Term_1(P,P^{(1)},P^{(2)})$. 
 
 Notice that we do not know a system of equations defining  a configuration group. We, therefore, need the following result.

\begin{prop} \label{tight1} Let $H=F _{R(W)}$ be one of the configuration groups with generators
$$P, Z, Z^{(1)},
Z_{1}^{(2,j,s)},Z^{(2,j,s)}, Q, \  j=1,\ldots ,i_s,\ s=1,\ldots
,t. $$    Then there is an algorithm to find  each terminal group of each fundamental sequence  for $H$ modulo $P$.
\end{prop}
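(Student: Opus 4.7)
The plan is to build the fundamental sequences for $H$ modulo $\langle P\rangle$ layer by layer, invoking Theorem \ref{modulo} at each layer, and to read off the terminal groups from the leaves of the resulting finite tree. The only real preliminary is to produce an explicit finite presentation of $H$ as $F_{R(W)}$; granted that, the rest is a bounded recursion.

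To obtain the presentation of $H$, I would start from the systems $U_{2,m_s}(P,Y_{k-1},Y_{k-1}^{(1)},Z_{k-1}^{(2,j,s)},Y_{k-1}^{(2,j,s)})=1$, which are effectively available by Proposition \ref{TX}(2), and impose on them the conditions (1)--(6) of Definition \ref{d2}. Algebraicity (1) is enforced by quotienting away the finitely many reducing quotients provided by Proposition \ref{TX}(1); MAX-inequivalence (2) and non-equivalence to further algebraic solutions (4) are expressed via the finitely many generalized fractional Dehn twists $\sigma_T$ from the proof of Theorem 6 \cite{KMdef}, with the effective bound on the number of MAX-classes guaranteed by Theorem \ref{term}; condition (3) is a containment among effectively constructed fundamental sequences; conditions (5) and (6) exclude finitely many proper algebraic lifts which are themselves obtained via Proposition \ref{TX}. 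After adjoining the relations $q^{n_q}=c_q$ for the root variables in $Q$, this yields an effective presentation of $H$.

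With $H$ effectively presented, I would apply Theorem \ref{modulo} to $H$ modulo $\langle P\rangle$ to obtain the abelian JSJ decomposition and the finite list of maximal standard quotients, each again effectively presented as a finitely generated fully residually free group. Recursively applying Theorem \ref{modulo} to each quotient (with the parameter subgroup augmented at each level by the edge and abelian vertex generators supplied by the previous JSJ, as in the standard construction of fundamental sequences) builds the tree of fundamental sequences; its leaves, detected as groups for which the maximal standard quotient coincides with the group itself, are the terminal groups. Termination of the recursion after finitely many layers follows from the Noetherian property of chains of proper fully residually free quotients, already exploited in the proofs of Theorems \ref{th:JSJ} and \ref{modulo}.

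The main obstacle is the first step: turning the boolean-combination-of-$\exists\forall$ conditions in Definition \ref{d2} into an \emph{effective} defining system for $H$ rather than only a first-order description. This works because each of the excluded behaviors --- being reducing, being MAX-equivalent to an earlier solution, or lifting to a solution of some $V_{3,s}=1$ --- corresponds to one of finitely many effectively computable proper quotients of the ambient coordinate group, so its negation amounts to removing a finite union of known subvarieties. The resulting (possibly disjoint) union of fully residually free pieces is then resolved into finitely many effectively presented components by one more application of Theorem \ref{modulo}, which is exactly what the subsequent iteration requires.
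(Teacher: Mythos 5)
Your proposal takes a genuinely different route from the paper, and the route has a gap at precisely the point you flag as the ``main obstacle.'' The paper does not try to produce a presentation of the configuration group $H=F_{R(W)}$ by ``quotienting away'' or ``removing subvarieties''; instead it encodes the whole situation as a system of \emph{cut equations} (equivalently, generalized equations) in the sense of \cite{Imp}, Section 7.7. For each $U_{2,m_s}=1$ a cut equation is built modulo $\langle P,Y_{k-1},Y_{k-1}^{(1)},Z_{k-1}^{(2,j,s)}\rangle$; the intervals of that cut equation labelled by the smaller parameter tuples are then re-used as the host for further nested cut equations coming from $V_{2,m}=1$ and $U_{1,m}=1$, with intervals carrying the same variable labels identified across the different cut equations. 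The negative conditions among (1)--(6) are then expressed as requirements that certain sub-intervals of the generalized equation carry non-identity specializations --- a condition that is \emph{native} to the generalized-equation formalism (it is just the statement that certain bases are non-trivial), not something one can impose by a group relation. Fundamental sets of solutions of these generalized equations modulo $\langle P\rangle$, restricted to those automorphisms that preserve the generalized equation structure at every level, then give the fundamental sequences, whose terminal groups are read off directly.

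The concrete problem with your plan is step one: conditions (1)--(6) are Boolean (in fact mostly \emph{negative}) conditions on specializations, not relations, so ``imposing'' them on the coordinate group of $U_{2,m_s}=1$ does not yield a presentation of a group $F_{R(W)}$. ``Quotienting away the reducing quotients'' inverts the logic --- algebraicity means the solution does \emph{not} factor through those quotients, and passing to a further quotient of the ambient coordinate group moves you in the wrong direction. Similarly, excluding MAX-equivalent or liftable solutions removes a constructible set from the solution variety, and the set of remaining homomorphisms is what discriminates $H$; you cannot obtain a finite presentation of the group it discriminates just by one more invocation of Theorem \ref{modulo}, which takes a finitely presented fully residually free group as input. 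Even once a presentation were in hand, your recursion by repeated applications of Theorem \ref{modulo} would not by itself produce fundamental sequences whose generic families satisfy the inequalities: as the paper notes explicitly, one must restrict the automorphism groups at each level to those preserving the generalized equations, and that restriction is invisible if you only track the abstract JSJ/maximal-standard-quotient data. This is the substantive content of the paper's proof that your proposal is missing; the rest of your outline (finiteness from Noetherianity, reading terminal groups off the leaves) is fine but is the easy part.
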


\begin{proof}  As in the proof in  of \cite [Theorem 11]{KMel}, we extensively use the technique of {\em generalized
equations}  described in \cite[Subsection 4.3 and Section 5]{Imp},  and {\em cut equations} described in  \cite[Section 5.7]{Imp}.
The reader has to be familiar with these sections of \cite{Imp}. In the proof of  \cite [Theorem 11]{KMel} we show how to construct, given a group $K$ that does not have a sufficient splitting modulo a subgroup $H$, a finite system of generalized equations $\Pi$ (see \cite[Section 7.7]{Imp}) for a minimal in its Max-class solution such that the intervals of $\Pi$ are labeled by values of the generators of $H$. For each system $$U_{2,m_s}(P,Z,Z^{(1)},Z_{1}^{(2,j,s)},Z^{(2,j,s)})=1$$ we construct a generalized equation modulo the parametric subgroup \newline $\langle P,Z,Z^{(1)},Z_{1}^{(2,j,s)}\rangle $. Since solutions are minimal algebraic, we can move all bases to the  intervals of this generalized equation labeled by $P,Z,Z^{(1)},Z_{1}^{(2,j,s)}$.  
For the intervals labeled by $P,Z,Z^{(1)}$ we add a generalized equation for the system
 $$V_{2,m}(P,Z,Z^{(1)}, Z_{1}^{(2,j,s)})=1$$ modulo the parametric subgroup $\langle P,Z,Z^{(1)}\rangle .$
 For the intervals labeled by $P,Z$ we add  generalized equations for the system
 $$U_{1,m}(P,Z,Z^{(1)})=1$$ modulo the parametric subgroup $\langle P,Z\rangle .$ The intervals labeled by $P$ will be the same for all these generalized equations. Similarly we identify all the intervals labeled by the same variables that occur in different generalized equations.
We now add to the obtained generalized equation,  the inequalities that guarantee that conditions (1)-(6) are satisfied. These inequalities are just indicating that
specializations of variables corresponding to some sub-intervals of the generalized equation must not be identities. But this is a standard requirement  for a solution of a generalized equation. For example, we write an equation $r_1(Z_{1}^{(2,j)})=\lambda _1$ and set that $\lambda _1$ is a base of the generalized equation. Then the condition $\lambda _1\neq 1$ must be automatically satisfied for a solution of a generalized equation.  So we can construct a finite number of generalized equations (denote this set $\mathcal {GE}$)
 such that each certificate corresponding to minimal in their Max-classes specializations is a solution of one of these generalized equations $\mathcal {GE}$.

We now construct fundamental sequences of solutions of the equations $\mathcal {GE}$ modulo $\langle P\rangle $.  Notice that not all solutions from the fundamental sequence
satisfy the necessary inequalities, but if we restrict the sets of automorphisms on all the levels to those whose application preserves corresponding generalized equations, we will have solutions of inequalities too. Therefore, a generic family of solutions does satisfy the inequalities. Using our standard procedure we construct fundamental sequences induced by the subgroup with generators $$P, Z, Z^{(1)},
Z_{1}^{(2,j,s)},Z^{(2,j,s)}, Q,\  j=1,\ldots ,i_s,\ s=1,\ldots
,t.$$  The subgroups generated by the images of these generators in the terminal groups of these fundamental sequences are precisely the terminal groups of the fundamental sequences for $H$ modulo $P$.

\end{proof}

This implies the following result.

\begin{cor} \label{tight2} There is an algorithm to construct the initial fundamental sequences for $Z$ of level 2 and width $i$. \end{cor}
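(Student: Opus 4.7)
The plan is to assemble the effective constructions already established. By Proposition \ref{TX}, we can algorithmically build the parametric $\exists\forall$-tree $T_{X_k}$ and extract, at each vertex, the finite family of terminal groups $F_{R(U_{m,i})}$ and $F_{R(V_{m,i})}$ together with their defining systems and fundamental sequences. In particular, for each fixed width $i = i_1 + \ldots + i_t$ we can enumerate, over all relevant tuples and all choices of $F_{R(U_{2,m_s})}$ on level (2,1), the candidate configuration groups $H = F_{R(W)}$ of Definition \ref{d2}, each presented by explicit generators and relators.

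For each such candidate $H$, the preceding proposition supplies an algorithm producing every terminal group of every fundamental sequence of $H$ modulo $\langle P \rangle$. By construction this is precisely the data required by an initial fundamental sequence of level $(2,1)$ and width $i$ related to $F_{R(V_2)}$. Conditions (1)--(6) of Definition \ref{d2} are encoded into the cut-equation/generalized-equation system built in that proof by declaring certain sub-intervals to be labels of bases; the requirement that bases correspond to nontrivial values is automatic for any solution of a generalized equation, so the fundamental sequences delivered by the previous proposition correspond to genuine certificates.

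Finally, the range of possible widths $i$ is bounded by the global constant $N = N(K,H)$ of Theorem \ref{term}, which is itself computable. Hence only finitely many candidate pairs $(t, i_1, \ldots, i_t)$ need be examined, and the total output is a finite list of initial fundamental sequences for $Y_{k-1}$ of level $(2,1)$ and width $i$, as required.

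The main obstacle is not algorithmic novelty but verifying that the cut-equation encoding used in the preceding proposition faithfully captures conditions (1)--(6) of Definition \ref{d2}, in particular that the MAX-nonequivalence and algebraicity conditions translate into base-inequalities on the generalized equation. This is already accomplished in the preceding proof, together with the effectiveness of the JSJ decomposition modulo finitely generated subgroups (Theorem \ref{modulo}) and of the complete system of reducing quotients (Proposition \ref{TX}.1), so the corollary follows by a direct assembly of these inputs.
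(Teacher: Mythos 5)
Your proposal is correct and follows the same route the paper intends: the paper states the corollary as an immediate consequence of the preceding proposition (``This implies the following result''), and your argument simply unpacks that implication by enumerating the finitely many configuration groups, invoking the preceding proposition to compute the terminal groups of their fundamental sequences modulo $\langle P\rangle$, and noting the effective bound on widths from Theorem \ref{term}. No gap; this is the same assembly of already-established effective ingredients the paper relies on.
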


For a given value $\bar P$ of $P$ the formula $\Theta$
can be proved on level  2 and depth 1
 if and only if  the following
conditions are satisfied.
\begin{enumerate}

\item [(a)] There exist algebraic solutions  for some system of equations
$U_{i,coeff}=1$ corresponding to  the terminal group of a fundamental
sequence $V_{i,\rm fund}$ for a configuration group modulo $P$.
\item [(b)] These solutions do not factor through the fundamental sequences that
describe solutions from $V_{i,\rm fund}$ that do not satisfy one of
the properties (1)-(6). There is a finite number of such fundamental
sequences, they are fundamental sequences corresponding to groups $Red_1,\ldots ,Red _k$,  $F _{R(V_{3,m_s})}$, $RQ_1,\ldots ,RQ_m$ and $H_{surplus}$. Therefore $\bar P$ cannot be extended to algebraic solutions of equations corresponding to terminal groups of these fundamental sequences. 

\item  [(c)] These solutions do not factor through 
 the terminal groups of fundamental sequences of
level 2 and greater depth derived from $V_{i,\rm fund}.$

\end{enumerate}

In this case there is a generic certificate of level $2$ width $i$ and depth 1.
These conditions can be described by a boolean combination of  $\exists\forall$-formulas of type (\ref{AE}).
Similarly we consider fundamental sequences of level $2$ width $i$ and depth 2 and deeper fundamental sequences of level 2 width $i$.  The main technical tool that is needed to show that the procedure of constructing deeper and deeper fundamental sequences of level 2 and width $i$ stops, is the  notion of so called tight enveloping NTQ groups and fundamental sequences.

\subsection{Tight Enveloping NTQ Groups}
We will now describe the construction of  a {\em tight enveloping NTQ group and fundamental sequence}. As we just mentioned,  tight enveloping NTQ groups serve as the main technical tool that is needed to show that the procedure of constructing deeper and deeper fundamental sequences of level 2 and width $i$ stops. This construction is needed to control the complexity of fundamental sequences. 
We begin with a fundamental sequence satisfying first and second restrictions and the NTQ group for it that we denote $F _{R(L_1)}.$  We denote the fundamental sequence $c(L_1)$. Let $G=F_{R(U)}$ be a subgroup of $F _{R(L_1)}$  and suppose we constructed the induced NTQ group for $G$, $Ind(F _{R(U)})$ as described in Subsection \ref{inher}. Our goal is to construct
a fundamental  sequence $c(U)$ and an NTQ group for $G$ such that the Kurosh rank of the NTQ group for $G$ with respect to $c(L_1)$ is the same as the Kurosh rank of $c(U)$. 
\begin{enumerate}\item [(a)] We take the NTQ group induced by the image  of $G$ in $F _{R(L_1)}$  from
$F _{R(L_1)}$,  this does not
increase the Kurosh rank, because we
 add only elements from abelian subgroups and conjugating elements that are mapped to the identity on the next lower level. Denote this group by $Ind(F _{R(U)})$. It is an NTQ group, mappings between different levels are restrictions of the mappings for $F _{R(L_1)}$. Then we do the
following.

  \item [(b)] We add from the top to the bottom (considering on level $i+1$ the image of the group
extended on level $i$)  those QH subgroups $Q$ of the  group $F _{R(L_1)}$ that intersect
  the image on level $i+1$ in a subgroup of finite index (in $Q$) and have less free variables than the
  subgroup in the intersection.
  
  \item [(c)] We add edge groups of abelian subgroups of the
enveloping group that have non-trivial intersection with
$Ind(F _{R(U)})$ if this does not increase the Kurosh rank.

\item [(d)] We add to $Ind(F_{R(U)})$ from bottom to top all the QH
subgroups of $F _{R(L_1)}$ that have non-trivial intersection with
$Ind(F _{R(U)})$, do not have free variables, the corresponding level
of $Ind(F _{R(U)})$ (more precisely, the extension of $Ind(F _{R(U)})$ on this step) intersects non-trivially some of their adjacent
vertex groups, and their  addition decreases the Kurosh rank.

\item [(e)] We add from bottom to top all the elements that conjugate different $QH$
subgroups (abelian vertex groups) of $Ind(F _{R(U)})$ into the same
QH subgroup of $F_{R( L_1)}$ if this decreases the Kurosh rank.

\item [(f)]  We add (from bottom to top) to each non-cyclic  factor $H_i$ in  the free decomposition $H_1\ast\ldots\ast H_t\ast F$ of the image of the constructed fundamental sequence on each level,  the abelian vertex groups and  edge groups on this level of $c(L_1)$ that are intersected non-trivially by $H_i$.
\end{enumerate}

We make these steps (which we call adjustment) iteratively and denote the obtained group by $Adj (F _{R(U)})$.
We  repeat the adjustment iteratively as many times as needed.
We call the constructed NTQ group the tight
enveloping NTQ group. We will also call the corresponding system
(fundamental sequence) the tight enveloping system (fundamental
sequence) and denote this fundamental sequence $c(U)$ by $TEnv
(G; L_1)$.  It has  other important properties that we will mention later.

 Given a fully residually free group $G=F _{R(U)}$, the  NTQ system $W(X_1,\ldots ,X_n)=1$ corresponding to a fundamental sequence for $U=1$ (with the quadratic system $S_1(X_1,\ldots, X_n)=1$ corresponding to the top level and its image), a system of equations ${\mathcal P}=1$  with coefficients in $F _{R(W)}$ having a solution in some extension of $F _{R(W)}$, we  construct fundamental sequences (satisfying first and second restrictions) for ${\mathcal P}=1$ modulo the non-cyclic free factors of the second level $\langle X_2,\ldots ,X_n\rangle $ of $F _{R(W)}.$ Consider one of these fundamental sequences and construct the NTQ group  for it.  Denote it $F _{R(L_1)}$.  \footnote 
 {This construction is similar to \cite{KMel}, Section 11.2, but there is an error  in Section 11.2 (that is easy to fix).  We erroneously said that the fundamental sequences above should be constructed modulo non-cyclic free factors of $F_{R(U)}$. Instead, they have to be constructed  modulo  non-cyclic free factors of the second level $\langle X_2,\ldots ,X_n\rangle $ of $F _{R(W)}.$} 

 \begin{prop}\label{tight}
1) Given a fully residually free group $G=F _{R(U)}$, the canonical NTQ system $W=1$ corresponding to a branch of the canonical embedding tree $T_{CE}(F _{R(U)})$ of the system $U=1$, a system of equations ${\mathcal P}=1$  with coefficients in $F _{R(W)}$ having a solution in some extension of $F _{R(W)}$, there is an algorithm for the construction of tight enveloping NTQ groups and fundamental sequences $TEnv
(G; L_1)$.

2) The bound in   \cite[Lemma 28]{KMel} can be found effectively.
\end{prop}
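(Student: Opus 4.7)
The plan is to reduce both parts of the proposition to algorithmic primitives already assembled in the paper and in Section 8 of \cite{KMel}.

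For part 1, the construction of tight enveloping NTQ groups and their fundamental sequences in Section 11.2 of \cite{KMel} is an induction along the NTQ tower of $W=1$: at each level one refines the splitting of the terminal group of that level to accommodate the specializations of ${\mathcal P}=1$ pulled up to that level, and then passes to an enveloping NTQ refinement. I would execute this induction effectively by observing that each step only uses (a) a relative abelian JSJ decomposition modulo a finitely generated subgroup, (b) the splitting induced on a finitely generated subgroup together with explicit expressions for its vertex and edge groups in the given generators, and (c) canonical fundamental sequences modulo a finite set of finitely generated subgroups. Item (a) and item (c) are effective by Theorem \ref{modulo}, while item (b) is the content of Theorem 27 of \cite{KMel} (cited immediately before Proposition \ref{TX}). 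The tightness conditions, which require that the enveloping refinement cannot be properly replaced by a smaller refinement compatible with $W=1$ and ${\mathcal P}=1$, are checked using the algorithm for the complete system of reducing quotients of a group with no sufficient splitting modulo a subgroup (Proposition \ref{TX}, part 1) together with the algorithmic tools for FRF groups from \cite{KMel}, Section 8.

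For part 2, Lemma 28 of \cite{KMel} supplies an a priori finite bound $N$ on a numerical invariant (depth/width) of the projective tree built in the construction of part 1. To make $N$ effective I would mirror the strategy used in the effectiveness proof of Theorem \ref{term}: for each candidate $m$ I write a first-order sentence $\Theta_m$ in the language of $F$ with constants that asserts the existence of a configuration (in the sense of Definition \ref{d2}, possibly iterated) witnessing the invariant being at least $m$. By the non-effective proof of Lemma 28 in \cite{KMel}, there is a smallest $m$ for which $\Theta_m$ is false. Since, after rewriting the quantifier prefix in the manner of Theorem~\ref{term}, $\Theta_m$ is (equivalent to) an $\exists\forall$-sentence over $F$, and the $\exists\forall$-theory of $F$ is decidable by Theorem 36 and Section 12.1 of \cite{KMel}, whose proofs do not invoke Lemma 28, we can test $\Theta_1,\Theta_2,\ldots$ successively and output $N=m-1$ at the first failure.

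The main obstacle is in part 1: making tightness algorithmic. Verifying that a proposed enveloping refinement is tight amounts to certifying the non-existence of intermediate proper refinements that are still compatible with both $W=1$ and the system ${\mathcal P}=1$. This is a potentially unbounded search, but it becomes finite once we combine Proposition \ref{TX}, which effectively enumerates the maximal fully residually free quotients that play a role, with the algorithm for induced splittings. Together these pin down a finite list of candidate refinements at each level, reducing tightness to the solvability of finitely many systems of equations in FRF groups modulo parameters, which is decidable. Once the tight enveloping NTQ groups and their fundamental sequences are produced level by level, part 2 plugs in by the decidability argument above, and both claims follow.
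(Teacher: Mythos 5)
Your proposal is correct and follows essentially the same route as the paper: part 1 reduces to the effective JSJ/induced-splitting/fundamental-sequence machinery already cited (the paper points to Theorems 26 and 27 of \cite{KMel}, which cover the tools you invoke via Theorem \ref{modulo} and Theorem 27), and part 2 is obtained by exactly the strategy of Theorem \ref{term}, encoding the existence of an $m$-fold configuration as an $\exists\forall$-sentence and using the decidability of the $\exists\forall$-theory of $F$. The paper's own proof is just a two-line pointer to these sources; your write-up fills in the intended details in a way consistent with them.
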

\begin{proof} 1)  Indeed, in the construction of tight enveloping fundamental sequences and systems we have to solve the following algorithmic problems:
find intersection of conjugates of QH-subgroups of two NTQ groups  and 
find solution sets of quadratic systems of equations in NTQ groups (to determine the rank of a QH subgroup). These problems were solved algorithmically in \cite{KMRS} and \cite{JSJ}.

2) The bound in  \cite[Lemma 28]{KMel} can be found effectively as in Theorem \ref{term}.
\end{proof}

 \subsection{Sketch of the proof of Theorem 8}
 Now we will very briefly recall how it is shown in \cite{KMel} that the procedure for constructing deeper and deeper fundamental sequences of level 2 and width $i$ stops. The goal of this explanation is to show that  Propositions \ref{tight1}, \ref{tight} and Corollary \ref{tight2} is all what is needed to make this procedure algorithmic.   Corollary \ref{tight2} takes care of the initial step of the procedure. For a fundamental sequence of level 2, width $i$, and depth $n$ with block-NTQ group $N_n$ constructed on step $n$ of the procedure, we consider  groups discriminated by this sequence and $\sim _{MAX}$-minimal values of extra variables $Z_1^{(2,i_s+1,s)}$ such that 
 $$V_{2}(P,Z,Z^{(1)}, Z_{1}^{(2,i_s+1,s)})=1,$$ 
  and values of $Z_1^{(2,i_s+1,s)}$ are either reducing or $\sim _{MAX}$-equivalent to the value of one of $Z_1^{(2,j,s)},$ $j=1,\ldots ,i_s$. 
It is proved that such a group  is either a proper quotient of a corrective extension $N_{n(corr)}$ of $N_n$ or a proper quotient of the  group obtained from $N_{n(corr)}$ by adding roots of some elements. 
We construct fundamental sequences for these groups. Then we construct  tight enveloping fundamental sequences for $N_1$ extracted from these fundamental sequences  and construct for them block-NTQ groups $N_{n+1}$.
 This is  step $n+1$ of the procedure, and $N_{n+1}$ has depth $n+1$.  
The main idea is that  these deeper fundamental sequences of level 2 and width $i$ can be constructed in such a way that  the complexity of the fundamental sequence of depth $n+1$  is not larger than the complexity of the fundamental sequence of depth $n$,  and the complexities cannot be the same on more than the bounded number of steps (we can find this bound algorithmically by Proposition \ref{tight}, part 2).
 
The non-increase in the complexity is organized as follows. As the size of  a
QH subgroup $Q$ in the tight enveloping NTQ group we consider the
size of the QH subgroup in the enveloping group containing $Q$ as a
subgroup of finite index.
 With each QH subgroup $Q$ corresponding to the equation $S_1(X_1,\ldots, X_n)=1$ we can associate a punctured surface. A simple closed curve on the surface corresponds to an element in $Q$. We suppose that $G$ is embedded into $F _{R(L_1)}$.  We suppose also that the family of simple closed curves (on the surface corresponding to $Q$) that are mapped to the identity  on the next level of the fundamental sequence for $W(X_1,\ldots ,X_n)=1$, is compatible with the family of splittings induced on  $Q$  from its images along the
 fundamental sequence for $L_1=1$ ($Q$ inherits a sequence of splittings from decompositions corresponding to $F _{R(L_1)}$ in which the boundary elements of $Q$ are elliptic and non-trivial).

These  compatibility   conditions and the construction of  the tight enveloping fundamental sequence  imply that the Kurosh rank of   $TEnv
(G; L_1)$ extracted from the fundamental sequence for $L_1$ is less than or equal to the Kurosh rank of the fundamental sequence for ${S}_1=1$
modulo free factors of $\langle X_2,\ldots ,X_n\rangle$. If the Kurosh ranks
are the same, we  reorganize the levels of the enveloping
system ${ L}_1=1$ moving down stable QH subgroups (see \cite[Section 7.3]{KMel}) into another system $L_2=1$ (and well aligned fundamental sequence)  so that they have
the same solutions obtained from the fundamental sequences, any homomorphism from $G$ to $F$ factoring through a fundamental sequence for $F _{R(W)}$ that factors through the fundamental sequence for $L_1$, also factors through the fundamental sequence for $L_2$, and  $size (TEnv(G; L_2))\leq size(S_1)$ for the tight enveloping fundamental sequence for $G$ induced from the the fundamental sequence for $L_2$.
If all the parameters (Kurosh rank, size, abelian rank, rank) for $S_1=1$ and $TEnv(G; L_2)$ are the same, then
$TEnv(G; L_2)$ has one level, and the abelian decomposition has the same graph and QH and abelian vertex groups corresponding to the system  $S_1=1$.  Notice, that  the
Kurosh rank of the tight enveloping fundamental sequence $TEnv
(G; L_2)$ is the same as the maximal Kurosh rank of the corresponding
subgroup in the terminal group in the enveloping fundamental
sequence modulo free non-cyclic factors of $\langle X_2,\ldots ,X_n\rangle$. (Notice also that we do not induce the fundamental sequence by the terminal free factor of $S_1=1$ (generated by free variables of quadratic equations in $S_1=1$), we just take its image in $F _{R(L_2)}.$)

Suppose now that $H\leq F _{R(W)},$ $L_1=1$ the same as above, and $TEnv(H; S_1)$ is a tight enveloping fundamental sequence for $H$.  Then one can modify the system $L_1=1$ into $L_2=1$ (with the well aligned fundamental sequence)
with the following properties: 

1) they have
the same fundamental solutions, 

2) any homomorphism from $H$ to $F$ factoring through the fundamental sequence for $F _{R(W)}$ that factors through the fundamental sequence for $L_1=1$ also factors through the fundamental sequence for $L_2=1$,

3)  one can construct a tight enveloping fundamental sequence $TEnv(H; L_2)$ for $H$ such that the Kurosh rank of 
$TEnv(H; L_2)$  is bounded by the Kurosh rank of $TEnv
(H; S_1)$  and in the case of equality  and $(size, ab)$ for $TEnv(H; L_2)$ is bounded  
by the $(size, ab)$ for $TEnv(H; S_1)$.  

In the case of the equality $TEnv(H; L_2)$ has one level, and the abelian decomposition is similar to the decomposition for $TEnv(H; S_1).$

The procedure for the construction of fundamental sequences of level 2 stops after finite number of steps and is algorithmic. This proves Theorem \ref{level2}. The set $True(\Theta)_2$ is  defined by  a Boolean combination of  $\exists\forall$-sets, and there is an algorithm to find this Boolean combination. 

We consider fundamental sequences of all levels  $m$ similarly. We now can make all the steps of the quantifier elimination procedure (to boolean combination of formulas (\ref{AE})) algorithmically.

This proves Theorem \ref{main}.

\newpage

\end{document}